\newfont{\vlte}{eufm10 at 22pt}
\newfont{\lte}{eufm10 at 18pt}
\newfont{\smb}{msbm6}
\newfont{\mmb}{msbm8}
\newfont{\tmb}{msbm10}
\newfont{\lmb}{msbm10 at 18pt}
\newcommand{\Gal}{\mbox{Gal}}
\newcounter{hw}
\newcommand{\be}{\begin{enumerate}}
\newcommand{\ee}{\end{enumerate}}
  \newcommand{\Aut}{\mbox{Aut}}
\newcommand{\calL}{{\mathcal L}}
\newcommand{\calM}{{\mathcal M}}
\newcommand{\F}{{\mathbb F}}
\newcommand{\N}{{\mathbb N}}
\newcommand{\Q}{{\mathbb Q}}
\newcommand{\R}{{\mathbb R}}
\newcommand{\Z}{{\mathbb Z}}
\newtheorem{theorem}{Theorem}[section]
\newtheorem{lemma}[theorem]{Lemma}
\newtheorem{proposition}[theorem]{Proposition}
\theoremstyle{definition}
\newtheorem{example}[theorem]{Example}
\theoremstyle{remark}
\newtheorem{remark}[theorem]{Remark}
\newtheorem{notation}[theorem]{Notation}
\newcommand{\isom}{\simeq}
\newcommand{\union}{\cup}
  \theoremstyle{plain}
\newtheorem{introprop}{Proposition}
\newtheorem{introthm}[introprop]{Theorem}
\begin{document}
\title{ON DECIDABLE ALGEBRAIC FIELDS}%
\author{Moshe Jarden}
\thanks{The first author was supported by the Minkowski Center for Geometry at Tel Aviv University,
established by the Minerva Foundation.}
\address{School of Mathematics\\ Tel Aviv University\\
Ramat Aviv, Tel Aviv 6139001, Israel}
\email{jarden@post.tau.ac.il}
\author{Alexandra Shlapentokh}%
\thanks{The second author has been partially supported by NSF grant DMS-1161456.}
\address{Department of Mathematics \\ East Carolina University \\ Greenville, NC 27858}%
\email{shlapentokha@ecu.edu }
\urladdr{www.personal.ecu.edu/shlapentokha} \subjclass[2000]{12E30}

\date{\today}

\bigskip

\bigskip

\begin{abstract}
We prove the following propositions.
Theorem 1: Let $M$ be a subfield of a fixed algebraic closure $\tilde \Q$ of $\Q$ whose existential elementary theory is decidable
(resp. primitively decidable). Then, M is conjugate to a recursive (resp. primitive recursive) subfield $L \subset \tilde \Q$.

Theorem 2: For each positive integer $e$ there are infinitely many $e$-tuples $\boldsymbol \sigma \in \Gal(\Q)^e$
such that the field $\tilde \Q( {\boldsymbol \sigma})$ -- the fixed field of $\boldsymbol \sigma$,  is recursive in $\tilde\Q$ and its elementary theory is decidable. Moreover, $\tilde  \Q(\boldsymbol \sigma)$ is PAC and $\Gal(\tilde\Q(\boldsymbol \sigma))$ is isomorphic to the free profinite group on $e$ generators.
\end{abstract}%
\maketitle
\section{Introduction}

The main theme of this work is the interplay between
decidability of large algebraic extensions of $\Q$ and
their recursiveness in a fixed algebraic closure $\tilde \Q$
of $\Q$.
One of the main results of [JaK75] gives for each
positive integer $e$ a recursive procedure to
decide whether a sentence $\theta$ in the language of rings is
true in the field $\tilde \Q({\boldsymbol \sigma})$ for all
${\boldsymbol \sigma}\in\Gal(\Q)^e$ outside a set of Haar measure zero
(see also [FrJ08, p.~442, Thm.~20.6.7]).
Here, 
$\Gal(\Q)=\Gal(\tilde \Q/\Q)$ is the absolute Galois group
of $\Q$, and for each
${\boldsymbol \sigma}=(\sigma_1,\ldots,\sigma_e)\in\Gal(\Q)^e$, \
$\tilde \Q({\boldsymbol \sigma})$ is the fixed field of
$\sigma_1,\ldots,\sigma_e$ in $\tilde \Q$.
The results of [FHJ84] even 
give a primitive recursive procedure for
the same decision problem (see also [FrJ08, p.~722, Thm.~30.6.1]).

Note that the above procedures give no information about
individual fields of the form $\tilde \Q({\boldsymbol \sigma})$.
Indeed, by Proposition \ref{APPa}, there
are uncountably many elementary equivalence classes of
fields $\tilde \Q({\boldsymbol \sigma})$.
On the other hand, since the language of rings is countable,
there are at most countably many decision procedures.
Hence, all but at most countably many fields of the form
$\tilde \Q({\boldsymbol \sigma})$ are undecidable.

Another question that one may ask in this context is about
the relation between an individual field $\tilde \Q({\boldsymbol \sigma})$
and $\tilde \Q$.
To this end we recall that one may order the elements of
$\tilde \Q$ in a primitive recursive sequence and give a
primitive recursive procedure to carry out the field
theoretic operations among the elements of that sequence.
It therefore makes sense to ask about a subfield $M$ of
$\tilde \Q$ whether $M$ is a recursive subset of $\tilde \Q$
(in which case $M$ is also a recursive (or a computable)
subfield of $\tilde \Q$).

Usually, this is not the case, because $\tilde \Q$ has only
countably many recursive subsets.
Even if the elementary theory of $M$ is decidable, it may
happen that $M$ has uncountably many conjugates (Example
\ref{RP}, 
when $M$ is a real or a $p$-adic closure of $\Q$).
The elementary theory of each of them is the same as that of
$M$, so is also decidable.
But only countably many of them are recursive in $\tilde \Q$. 

We shed light on these problems by proving two results:

\begin{introthm}
\label{A}
Let $M$ be a subfield of\/ $\tilde \Q$
whose existential
elementary theory is decidable (resp. primitively decidable).
Then, $M$ is conjugate to a recursive
(resp. primitive recursive) subfield $L$ of $\tilde \Q$.
\end{introthm}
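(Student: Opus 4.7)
The plan is to construct $L$ as the union of a (primitive) recursive chain $\Q = L_0 \subseteq L_1 \subseteq \cdots$ of finite subextensions of $\tilde{\Q}/\Q$, each given by a primitive element $\theta_n$ with minimal polynomial $p_n \in \Q[x]$, built by a back-and-forth using the existential oracle for $M$. Once $L \cong M$ is obtained as $\Q$-algebras, conjugacy is automatic: any $\Q$-isomorphism between subfields of $\tilde{\Q}$ extends to an element of $\Gal(\Q)$. I will use the primitive recursive enumeration $\alpha_0, \alpha_1, \ldots$ of $\tilde{\Q}$ mentioned in the introduction.

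At each stage I will carry an existential formula $\Phi_n(y_0,\ldots,y_n)$ in the language of rings whose conjuncts are polynomial equalities over $\Q$ of the form $p_k(y_k) = 0$ and $y_{k-1} = q_k(y_k)$, where $q_k \in \Q[x]$ is chosen so that $\theta_{k-1} = q_k(\theta_k)$. The invariant to be preserved is $M \models \exists \bar y\, \Phi_n(\bar y)$; intuitively, $\bar y$ represents an abstract embedding $L_n \hookrightarrow M$ sending $\theta_k \mapsto y_k$. For the forth step, to decide whether $\alpha_n \in L$, I would compute a primitive element $\theta'$ of $L_n(\alpha_n)$, its minimal polynomial $p' \in \Q[x]$, and the polynomial $q' \in \Q[x]$ with $\theta_n = q'(\theta')$; form $\Phi' := \Phi_n \wedge p'(y') = 0 \wedge y_n = q'(y')$; and query the oracle whether $M \models \exists \bar y, y'\, \Phi'$. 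If yes, I would replace $L_n$ by $L_n(\alpha_n)$ and update $\Phi_n, \theta_n, p_n$ accordingly; otherwise leave them. Since each stage uses a single existential query, $L = \bigcup_n L_n$ will be (primitive) recursive as a subset of $\tilde{\Q}$.

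Interleaved with the forth steps I plan to run a back step that forces $\mathrm{ExTh}(M) \subseteq \mathrm{ExTh}(L)$: enumerate all existential sentences in the language of rings, and for each one that holds in $M$ (oracle query), adjoin to $L$ a witness chosen from $\tilde{\Q}$ by the same template, testing existential consistency of the enlarged $\Phi_n$ before committing. The forth step automatically yields the reverse inclusion $\mathrm{ExTh}(L) \subseteq \mathrm{ExTh}(M)$: a coherent embedding $\iota : L \hookrightarrow M$ exists at the limit by K\"onig's lemma applied to the finitely-branching tree of realisations of $\bigwedge_n \Phi_n$ in $M$, and existential sentences pass through embeddings. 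Equal existential theories of two algebraic extensions of $\Q$ force $\Q$-isomorphism, by a standard back-and-forth on finitely generated subextensions, so $L \cong M$.

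The main obstacle I foresee is the back step: matching numbers of roots in $L$ and $M$ of each irreducible $f \in \Q[x]$ is not enough to enforce isomorphism (Gassmann-equivalent non-conjugate subgroups of finite Galois groups rule this out), so the back step must treat arbitrary finite-tuple existential witnesses, and each adjunction's existential consistency with $\Phi_n$ must be verified---fortunately again by a single existential query to $M$.
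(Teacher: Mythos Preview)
Your forth step and the K\"onig/inverse-limit argument are essentially the paper's construction (its Lemmas \ref{POLY}, \ref{IND}, and the inverse-limit step in Theorem \ref{SHL}). The gap is the back step, and it breaks the primitive-recursive half of the theorem: ``adjoin to $L$ a witness chosen from $\tilde \Q$'' means searching through $\tilde \Q$-tuples until one both witnesses the sentence and keeps $\Phi_n$ satisfiable in $M$---the oracle only answers yes/no and does not hand you the tuple, so there is no primitive-recursive bound on this search. Once the back steps are in the construction, even deciding ``$\alpha_n\in L$?'' requires simulating those searches, so $L$ is only recursive.

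Drop the back step; it is unnecessary. The forth step alone already makes the K\"onig embedding $\iota:L\to M$ surjective. For an irreducible $p$ with root set $P$, suppose $|L\cap P|<|M\cap P|$; take $N$ past the stages of all elements of $P$ and with $L_N\cap P=L\cap P$. Any embedding $L_N\hookrightarrow M$ misses some $\beta\in M\cap P$, and pulling $\beta$ back gives $\alpha\in P\setminus L_N$ with $L_N(\alpha)\hookrightarrow M$; but at the earlier stage $m$ where $\alpha=\alpha_m$ was processed, $L_m(\alpha_m)\subseteq L_N(\alpha)\hookrightarrow M$, so the oracle would have said yes and $\alpha$ would already lie in $L_N$---contradiction. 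Hence $|L\cap P|=|M\cap P|$ for every $p$, so $\iota$ is bijective on each $P$ and therefore onto. This is exactly the paper's mechanism: its Lemma \ref{RES} is the observation that this root-count condition is independent of which embedding one uses, which lets the inverse limit inherit it. Incidentally, your Gassmann worry is misplaced: for algebraic extensions of $\Q$, even matching mere \emph{existence} of a root of each irreducible $p$ forces $\Q$-isomorphism (for each finite Galois $N/\Q$ one gets embeddings $L\cap N\hookrightarrow M\cap N$ and $M\cap N\hookrightarrow L\cap N$, hence conjugacy in $N$, and then an inverse limit; this is [FrJ08, Lemma 20.6.3(b)]). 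Gassmann equivalence is the strictly weaker condition on intersections with conjugacy classes, not on fixed points in every $G/U$.
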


In view of this theorem,
given a subfield of $\tilde \Q$ with 
undecidable existential (or elementary) theory in the 
language of rings, one can distinguish between two cases. 
The theory can be undecidable because the field has no 
computable conjugate within the given copy of
$\tilde \Q$ or the theory can be undecidable for a
different arithmetic reason. 
In the first case it is tempting to say that the theory is 
{\it trivially} undecidable.  
A simple example of  a field with a trivially 
undecidable existential theory is a Galois extension of $\Q$ 
which is not recursive as a subset of $\tilde \Q$.  

\begin{introthm}
\label{B}
For each positive integer $e$ there are infinitely many
$e$-tuples ${\boldsymbol \sigma}\in\Gal(\Q)^e$ such that the field
$\tilde \Q({\boldsymbol \sigma})$ is recursive in $\tilde \Q$ and its
elementary theory is decidable.
Moreover, $\tilde \Q({\boldsymbol \sigma})$ is PAC and
$\Gal(\tilde \Q({\boldsymbol \sigma}))$ is isomorphic to the free profinite
group on $e$ generators.
\end{introthm}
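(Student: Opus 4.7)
The plan is to combine the measure-theoretic results cited in the introduction with Theorem~\ref{A}. By [FrJ08, Thm.~20.6.7] there is a single algorithm that, for each sentence $\theta$ in the language of rings, correctly decides the truth of $\theta$ in $\tilde\Q(\boldsymbol\sigma)$ for all $\boldsymbol\sigma$ outside a $\theta$-dependent Haar-null set. Since the language of rings is countable, the union of these exceptional sets is still null, so we obtain a measure-one set $S_1\subseteq\Gal(\Q)^e$ on which this single algorithm decides the full elementary theory of $\tilde\Q(\boldsymbol\sigma)$. Classical theorems of Jarden (see [FrJ08, Chaps.~18, 25]) furnish a second measure-one set $S_2$ of $\boldsymbol\sigma$ for which $\tilde\Q(\boldsymbol\sigma)$ is PAC and $\Gal(\tilde\Q(\boldsymbol\sigma))$ is isomorphic to the free profinite group $\hat{F}_e$ of rank $e$. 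Thus $S:=S_1\cap S_2$ has full measure, and every $\boldsymbol\sigma\in S$ already satisfies the PAC, freeness, and decidability conclusions of the theorem.

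Next I would fix any $\boldsymbol\sigma\in S$ and set $M=\tilde\Q(\boldsymbol\sigma)$. Because the elementary theory of $M$ is decidable, so is its existential fragment. Theorem~\ref{A} then supplies $\tau\in\Gal(\Q)$ such that $L:=\tau M$ is a recursive subfield of $\tilde\Q$. Letting $\boldsymbol\sigma':=(\tau\sigma_1\tau^{-1},\ldots,\tau\sigma_e\tau^{-1})$, a direct computation gives $\tilde\Q(\boldsymbol\sigma')=\tau(\tilde\Q(\boldsymbol\sigma))=L$, which is recursive in $\tilde\Q$. Since inner conjugation by $\tau$ is a field isomorphism that induces an isomorphism on absolute Galois groups, $\tilde\Q(\boldsymbol\sigma')$ inherits from $M$ the properties of being PAC, having absolute Galois group isomorphic to $\hat{F}_e$, and having a decidable elementary theory.

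To produce \emph{infinitely} many such tuples, I would observe that already the single field $L$ corresponds to infinitely many admissible $e$-tuples: $\Gal(\tilde\Q/L)\simeq\hat{F}_e$ has infinitely many topologically generating $e$-tuples. For $e\ge 2$ this follows from the action of the (infinite) automorphism group of $\hat{F}_e$ on its bases, and for $e=1$ it is the fact that $\hat\Z$ admits uncountably many topological generators, namely all units of $\hat\Z$. Each such generating $e$-tuple $\boldsymbol\sigma''\in\Gal(\Q)^e$ satisfies $\tilde\Q(\boldsymbol\sigma'')=L$ and therefore inherits recursiveness, decidability, the PAC property, and the freeness of its absolute Galois group.

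The step I expect to require the most care is the first one, where I must reconcile the separate measure-one statements and confirm that the decision procedure of [FrJ08, Thm.~20.6.7] really is the same algorithm for every $\boldsymbol\sigma$ in the intersection; this uniformity is precisely what upgrades an almost-everywhere decision procedure to decidability of the theory of a specific field $\tilde\Q(\boldsymbol\sigma)$ for $\boldsymbol\sigma\in S$. Once a single such $\boldsymbol\sigma$ is in hand, the conjugation trick and the enumeration of generating $e$-tuples of $\hat{F}_e$ are essentially mechanical given Theorem~\ref{A}.
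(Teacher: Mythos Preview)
Your first step contains a genuine gap that cannot be repaired. The Jarden--Kiehne procedure decides the \emph{almost-sure theory}
\[
T_e=\{\theta:\ \theta\text{ holds in }\tilde\Q(\boldsymbol\sigma)\text{ for almost all }\boldsymbol\sigma\},
\]
and this theory is \emph{not complete}: for a sentence like $\theta\equiv(\exists x)(x^2=2)$ and $e=1$, the set $\{\sigma:\tilde\Q(\sigma)\models\theta\}$ has Haar measure exactly $1/2$, so neither $\theta$ nor $\neg\theta$ lies in $T_1$. Your ``$\theta$-dependent null set'' is therefore not null for such $\theta$, and the countable-intersection argument collapses. In fact the paper's own Proposition~\ref{APPa} shows that for almost every pair $(\boldsymbol\sigma,\boldsymbol\sigma')$ the fields $\tilde\Q(\boldsymbol\sigma)$ and $\tilde\Q(\boldsymbol\sigma')$ are elementarily inequivalent, so there is no set of positive measure on which the individual theories coincide with a single decidable theory. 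The introduction says exactly this: ``the above procedures give no information about individual fields of the form $\tilde\Q(\boldsymbol\sigma)$.'' Consequently there is no $\boldsymbol\sigma$ for which the Jarden--Kiehne algorithm decides $\mbox{Th}(\tilde\Q(\boldsymbol\sigma))$, and your appeal to Theorem~\ref{A} never gets off the ground.

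The paper's proof is not measure-theoretic at all and does not use Theorem~\ref{A}. It is a direct construction (Proposition~\ref{DECc}): one builds an ascending tower $M_0\subset M_1\subset\cdots$ of number fields by a recursive procedure that at each stage (i) adjoins a zero of the next absolutely irreducible plane curve over $K$ (securing PAC), (ii) realizes the next $e$-generated finite group as a quotient of the running Galois group (securing $\Gal(M)\cong\hat F_e$), and (iii) absorbs the next element of the enumerated $K_s$ into the current Galois hull (securing recursiveness of $M=\bigcup M_n$). Hilbert irreducibility (Lemmas~\ref{DECa},~\ref{DECb}) is the engine that makes each step possible. Decidability of $\mbox{Th}(M)$ then follows from Lemma~\ref{JAX}, which uses the recursive axiomatization of perfect PAC fields with Galois group $\hat F_e$ together with the recursively known root set of $M$. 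Infinitely many such fields are obtained in Theorem~\ref{DECd} by iterating the construction over successively larger base fields $K(z)$.
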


Both theorems make sense, because we can list the elements
of $\tilde \Q$ in a primitive recursive sequence.
The proof of Theorem \ref{A} depends on our ability to perform
the basic field theoretic operations including the
factorization of
polynomials over given number fields and even over $\tilde \Q$
in a primitive recursive way.
The proof of Theorem \ref{B} uses in addition a recursive (but not
primitive recursive) version of Hilbert irreducibility
theorem (Lemma \ref{DECa}).

All of these operations can be carried out over 
each given finitely generated field (over its prime field).
In the terminology of [FrJ08, Chap.~19], these fields have
the ``elimination theory''.
So, actually we prove Theorems \ref{A} and \ref{B} for fields with
elimination theory.
In particular, they hold for finitely generated fields.

Theorem 13.3.5 and Proposition 13.2.1 of [FrJ08] state that
every infinite finitely generated field $K$ is Hilbertian.
An analysis of their proofs seems to show that the procedure
to find a point in a given Hilbertian subset of $K^r$ is
primitive recursive.
If this is true, we may strengthen Theorem B and replace
``recursive'' and ``decidable'' in that theorem by
``primitive recursive'' and ``primitively decidable'',
respectively.
However, carrying out this check in the present work will
take us away from its main topic. 
So, we don't do it here.\\

{\large Acknowledgement}:
The authors are indebted to Aharon Razon for critically
reading a draft of this work.


\section{Recursive Subfields of $\tilde K$}
\label{RECR}

We consider a presented field $K$ with {\bf elimination
theory} in the sense of [FrJ08, p.~410, Def.~19.2.8].
This is a field which is explicitly constructed from the
ring $\Z$ of integers,
one has ``effective recipes'' to add and multiply given elements
and to ``effectively compute'' the inverse of each given non-zero
element.
In particular, $K$ is countable.
Most important, there is an ``effective algorithm'' to
factor each given non-zero polynomial $f\in K[Z]$ into
a product of irreducible polynomials.
Moreover, it is possible to ``effectively adjoin'' a root
$z$ of $f$ to $K$.
Each element $z'$ of $K(z)$ is then uniquely given as a sum
$\sum_{i=0}^{d-1}a_iz^i$ with $d=\deg(f)$ and
$a_0,\ldots,a_{d-1}\in K$,
and one can effectively compute $\mbox{irr}(z',K)$.
The field $K(z)$ has again the elimination theory.
An effective version of the primitive element theorem is
also true, that is if $z_1,\ldots,z_n$ are roots of given
irreducible separable polynomials $f_1,\ldots,f_n\in K[Z]$,
respectively, then one
can ``effectively compute'' an irreducible separable
polynomial $f\in K[Z]$ and a root $z$ of $K$ such that
$K(z_1,\ldots,z_n)=K(z)$.
Moreover, one can ``effectively present'' $z$ as a linear
combination of $z_1,\ldots,z_n$ with coefficients in $K$ and
``effectively present'' each $z_i$ as a polynomial in $z$
with coefficients in $K$.

All of these notions and algorithms are rigorously defined,
explained, and proved in [FrJ08, Sections 19.1 and 19.2].
Moreover, it is proved there that the above algorithms are
{\bf primitive recursive} in the usual sense (e.g.~as defined
in [FrJ08, Sec.~8.4]).
In this case we also say that the above algorithms are
{\bf effective}.
It is further proved in [FrJ08, Sec.~19.4] that both the
separable closure $K_s$ and the algebraic closure
$\tilde K$ of $K$
can be presented, and then have elimination theory.

Having done so, we say that a subfield $M$ of $\tilde K$
is {\bf recursive}  (resp.~{\bf primitive
recursive}), if $M$ is a recursive (resp.~primitive
recursive) subset of $\tilde K$ (e.g.~[FrJ08, Sections 8.4 and
8.5, where in each case one examines the characteristic
function of the subset]).
Since addition, multiplication, and taking inverse of
elements of $\tilde K$ are primitive recursive, these
operations in $M$ are also recursive (resp.~primitive recursive).

Independently of the question whether $M$ is a recursive
subfield of $\tilde K$ or not, we may consider the language
$\calL(\mbox{ring},K)$ of rings with a constant symbol for each
element of $K$.
An {\bf existential sentence} in $\calL(\mbox{ring},K)$ is a
sentence which is equivalent to a sentence of the form
$(\exists X_1)\cdots(\exists X_n)
[\bigvee_i\bigwedge_jf_{ij}(X_1,\ldots,X_n)=0]$
[FrJ08, p.~462].
Let $\mbox{Ex}(M)$ be the set of all existential sentences in
$\calL(\mbox{ring},K)$ which are true in $M$.
This is a subset of the {\bf elementary theory} $\mbox{Th}(M)$
consisting of all sentences of $\calL(\mbox{ring},K)$ that hold in
$M$.
We say that $\mbox{Ex}(M)$ is {\bf decidable}
(resp.~{\bf primitively decidable}) if there exists an
algorithm (resp.~primitive recursive algorithm) to decide
whether a given existential sentence of $\calL(\mbox{ring},K)$
holds in $M$ or not.

The procedures we describe below use verbs like
``construct'', ``find'', ``compute'', etc.
When these terms are preceded by the adverb ``effectively'',
then the corresponding
parts of the procedures are primitive recursive.

By definition, each primitive recursive subset of $\tilde K$
is recursive.
Similarly, if $\mbox{Th}(M)$ (resp.~$\mbox{Ex}(M)$) is primitively decidable,
then $\mbox{Th}(M)$ (resp.~$\mbox{Ex}(M)$) is decidable.
(See also a comparison in [FrJ08, pp.,159-150, Sec.~8.6] between
recursive and primitive recursive decidability procedures.)

Note that, in the cases we consider,
$\mbox{Th}(M)$ involves only constant symbols of $K$ but not of
$\tilde K\setminus K$.
Thus, the question whether $M$ is a recursive
(respectively, primitive recursive) subfield of
$\tilde K$ is independent of the question whether $\mbox{Th}(M)$ is
decidable (resp.~primitively recursiv).
Indeed, even if $\mbox{Th}(M)$ is decidable, $M$ may have
uncountably many $K$-conjugates in $\tilde K$ (Example \ref{RP}).
Since $K$ is countable, so is the language $\calL(\mbox{ring},K)$.
Hence, $\tilde K$ has only countably many recursive subfields.
It follow that at most countably many of the $K$-conjugates
of $M$ in $\tilde K$ are recursive in $\tilde K$.
All the others are non-recursive in $\tilde K$.

However, we prove in this section that if $M$ is a field
extension of $K$ in $\tilde K$ and $\mbox{Ex}(M)$ is decidable
(resp.~recursively decidable), then
$M$ has a $K$-conjugate $L$ which is recursive
(resp.~primitive recursive) in $\tilde K$.
In particular, $\mbox{Th}(L)=\mbox{Th}(M)$, hence $\mbox{Ex}(L)=\mbox{Ex}(M)$,
so $\mbox{Ex}(L)$ is in addition decidable (resp.~primitively decidable).

\begin{remark}
\label{ROOT}
Let $K$ be a field,
$p\in K[Z]$, and $\phi\colon L\to L'$ a $K$-isomorphism
of subfields of $\tilde K$.
Denote the set of roots of $p$ in $\tilde K$ by $P$.
Then, $\phi(P\cap L)\subset P\cap L'$
and $\phi^{-1}(P\cap L')\subset P\cap L$.
Therefore, $\phi(P\cap L)=P\cap L'$.
\end{remark}

\begin{lemma}
\label{POLY}
Let $M$ be a subfield of $\tilde K$ that contains $K$
such that $\mbox{Ex}(M)$ is decidable
(resp.~recursively decidable).
Suppose we are given
\begin{itemize}
\item[(a1)]
a finite separable extension $L$ of
$K$ and a $K$-embedding of $L$ into $M$, and
\item[(a2)]
a monic separable polynomial $p$ in $K[Z]$.
\end{itemize}
Let $P$ be the set of roots of $p$ in $K_s$.
Then,
\begin{itemize}
\item[(b)]
we can determine (resp.~effectively determine)
whether there exists a $K$-embedding of $L(z)$ into $M$
with $z\in P\setminus L$;
\end{itemize}
\end{lemma}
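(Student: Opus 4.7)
The plan is to reduce the existence of a $K$-embedding $L(z)\hookrightarrow M$ with $z\in P\setminus L$ to a finite disjunction of purely existential sentences in $\calL(\mbox{ring},K)$, which can then be handed to the given decision procedure for $\mbox{Ex}(M)$.

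Since $L$ is a finite separable extension of $K$, it inherits elimination theory, so I would effectively factor $p$ in $L[Z]$ into monic irreducible factors $p=\prod_{i=1}^{r}p_i$. The linear factors correspond exactly to the roots of $p$ lying in $L$, while the roots of the factors of degree $\geq 2$ are precisely the elements of $P\setminus L$. Thus the question reduces to: is there an index $i$ with $\deg p_i\geq 2$ such that $L(z)$ admits a $K$-embedding into $M$ for some (equivalently, every) root $z$ of $p_i$?

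For each such $p_i$ I would pick an abstract root $z_i$ and apply the effective primitive element theorem to the tower $K\subset L\subset L(z_i)$ to produce a primitive element $\gamma_i$ of $L(z_i)/K$ together with $h_i:=\mbox{irr}(\gamma_i,K)\in K[Z]$, so that $L(z_i)=K(\gamma_i)$ as $K$-algebras. The key observation is that $L(z)$ is $K$-isomorphic to $L(z_i)=K(\gamma_i)$ for every root $z$ of $p_i$; consequently a $K$-embedding $L(z)\hookrightarrow M$ exists iff $h_i$ has a root in $M$, iff the existential sentence
$$\varphi_i:=(\exists Y)[h_i(Y)=0]$$
lies in $\mbox{Ex}(M)$.

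The resulting procedure runs the given algorithm for $\mbox{Ex}(M)$ on each of the finitely many $\varphi_i$ and answers ``yes'' iff at least one of them is true in $M$. All intermediate steps -- factorization of $p$ in $L[Z]$, the primitive element computation, and the formation of $h_i$ -- are primitive recursive by the elimination theory recalled in Section \ref{RECR}, so the overall procedure is effective (resp.\ primitive recursive) precisely when the decision procedure for $\mbox{Ex}(M)$ is. I do not anticipate a substantive obstacle; the only conceptual point is the passage from an embeddability question whose coefficients live in $L$ to a root-existence question over $K$, which is exactly what the primitive element theorem supplies -- necessarily so, since $\calL(\mbox{ring},K)$ has no constant symbols for elements of $L$ outside $K$.
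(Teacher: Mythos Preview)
Your proposal is correct and follows essentially the same outline as the paper: factor $p$ over $L$, discard the linear factors, and for each remaining irreducible factor reduce the $K$-embeddability of $L(z_i)$ into $M$ to an existential sentence in $\calL(\mbox{ring},K)$. The only difference is the encoding of that sentence---the paper fixes a primitive element $y$ for $L/K$ with $f=\mbox{irr}(y,K)$, rewrites each nonlinear factor as $g_i(y,Z)$ with $g_i\in K[Y,Z]$, and tests the two-variable sentence $(\exists Y)(\exists Z)[f(Y)=0\land g_i(Y,Z)=0]$, whereas you collapse $L(z_i)$ to a single primitive element $\gamma_i$ over $K$ and test the one-variable sentence $(\exists Y)[h_i(Y)=0]$; the two encodings are interchangeable since a zero $(y',z')$ of $(f,g_i)$ in $M$ generates the same subfield as a zero of $h_i$.
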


\begin{proof}
We effectively decompose $p(Z)$
into a product of monic irreducible factors over $L$,
$$
p(Z)=(Z-a_1)\cdots(Z-a_l)h_1(Z)\cdots h_m(Z).
$$
such that $a_1,\ldots,a_l\in L$
and $\deg(h_i)\ge2$ for $i=1,\ldots,m$
[FrJ08, p.~407, Lemma 19.2.2].
If $m=0$, then $P\subset L$,
so there is no embedding of
$L(z)$ into $M$ with $z\in P\setminus L$.

Otherwise, we effectively construct a primitive element $y$ for $L/K$,
effectively compute $f=\mbox{irr}(y,K)$, and set $d=\deg(f)$.
For each $1\le i\le m$ we set $d_i=\deg(h_i)$.
Then, we effectively compute for each $0\le j\le d_i$ the unique
polynomial $g_{ij}$ in $K[Y]$ of degree at most $d-1$ such
that
$h_i(Z)=\sum_{j=0}^{d_i}g_{ij}(y)Z^j$.
We set $g_i(Y,Z)=\sum_{j=0}^{d_i}g_{ij}(Y)Z^j$
and observe that $g_i\in K[Y,Z]$ and $g_i(y,Z)=h_i(Z)$.
Then, we denote the existential sentence
$$
(\exists Y)(\exists Z)[f(Y)=0\land g_i(Y,Z)=0]
$$
of $\calL(\mbox{ring},K)$ by $\theta_i$.

Since the existential theory of $M$ in the language
$\calL(\mbox{ring},K)$
is decidable (resp.~pri\-mitively decidable),
we may check (resp.~effectively check)
the truth of each $\theta_i$ in $M$.
If none of the sentences $\theta_1,\ldots,\theta_m$ is true in
$M$, then there exists no $K$-embedding $\phi'$ of
$L(z)$ into $M$ with $z\in P\setminus L$.

Indeed, if such $z$ and $\phi'$ exist, we write $y'=\phi'(y)$ and
$z'=\phi'(z)$.
Then, $z\in P\setminus\{a_1,\ldots,a_l\}$,
so there exists $1\le i\le m$ with $h_i(z)=0$.
Hence, $g_i(y,z)=0$.
Applying $\phi'$, we see that $f(y')=0$ and $g_i(y',z')=0$,
with $y',z'\in M$.
Thus, $\theta_i$ holds in $M$, in contrast to our assumption.

Finally suppose that one of the sentences
$\theta_1,\ldots,\theta_m$, say $\theta_1$, is true in $M$.
Thus, there exist
$y',z'\in M$ with $f(y')=0$ and
$g_1(y',z')=0$.
Since $f$ is irreducible over $K$,
we may extend (resp.~effectively extend)
the map $y\twoheadrightarrow y'$ to a $K$-isomorphism
$\phi'_1$ of $L=K(y)$ onto $K(y')$.
Since $g_i(y,Z)=h_i(Z)$ is irreducible over $K(y)=L$,
the polynomial $g_i(y',Z)$ is irreducible over $K(y')$.
Since $z'$ is a root of the latter polynomial,
we may find (resp.~effectively find) a root $z$ of $g_i(y,Z)$
in $K_s$ and conclude that the isomorphism
$(\phi'_1)^{-1}\colon K(y')\to K(y)$
extends to an isomorphism $K(y',z')\to K(y,z)$ that maps
$z'$ onto $z$.
In particular, $h_1(z)=g_1(y,z)=0$, so $z\in P\setminus L$.
Then, the inverse isomorphism $\phi'\colon K(y,z)\to K(y',z')$
of the latter isomorphism is the desired one.
\end{proof}

\begin{remark}
\label{NEXT}
Note that the $K$-embedding $\phi'\colon L(z)\to M$
constructed in the last paragraph of the latter proof does
not necessarily extend the $K$-embedding $\phi\colon L\to M$.
\end{remark}

\begin{lemma}
\label{IND}
Let $M$ be a subfield of $\tilde K$ that contains $K$
such that $\mbox{Ex}(M)$ is decidable (resp.~recursively decidable).
Suppose we are given
\begin{itemize}
\item[(a1)]
a finite separable extension $L$ of
$K$ and a $K$-embedding $\phi\colon L\to M$ and
\item[(a2)]
a monic separable polynomial $p$ in $K[Z]$.
\end{itemize}
Let $P$ be the set of roots of $p$ in $K_s$.
Then, we can find (resp.~effectively find) a subset $I$ of $P$
such that there exists a
$K$-embedding $\psi\colon L(I)\to M$ with the property that
$I=P\cap L(I)$ and $\psi(I)=P\cap M$.
\end{lemma}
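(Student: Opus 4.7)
The plan is to build $I$ by iteratively applying Lemma \ref{POLY} to adjoin roots of $p$ one at a time, maintaining throughout the invariant that the current finite set $I_n\subseteq P$ satisfies $I_n = P\cap L(I_n)$ and comes with a $K$-embedding $\phi_n\colon L(I_n)\to M$ (which need not extend $\phi$). Initialize $I_0$ by effectively factoring $p$ over $L$ via [FrJ08, Lemma 19.2.2] and collecting the roots of the linear factors, so $I_0 = P\cap L$, $L(I_0)=L$, and $\phi_0:=\phi$.

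For the iterative step, set $L_n:=L(I_n)$ and invoke Lemma \ref{POLY} with $(L_n,\phi_n,p)$ in place of $(L,\phi,p)$. If the answer is ``no'', halt and output $I:=I_n$, $\psi:=\phi_n$. Otherwise the proof of Lemma \ref{POLY} produces an explicit root $z\in P\setminus L_n$ together with a $K$-embedding $\phi'\colon L_n(z)\to M$; set $\phi_{n+1}:=\phi'$ and effectively compute $I_{n+1}:=P\cap L_n(z)$ by factoring $p$ over $L_n(z)$. Since $z\in I_{n+1}$ and $I_n\subseteq I_{n+1}$, we have $L(I_{n+1})=L_n(z)$, so the invariant persists. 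Because $|I_n|$ strictly increases while staying bounded by $\deg p$, the loop halts in at most $\deg p$ iterations.

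At termination the condition $I=P\cap L(I)$ is immediate from the invariant. Setting $L':=\psi(L(I))\subseteq M$, Remark \ref{ROOT} applied to $\psi\colon L(I)\to L'$ gives $\psi(I)=P\cap L'\subseteq P\cap M$. The crux is the reverse inclusion. Suppose for contradiction that some $z^*\in P\cap M$ lies outside $L'$, and let $q\in L'[Z]$ be the minimal polynomial of $z^*$ over $L'$; then $q\mid p$ in $L'[Z]$ with $\deg q\ge 2$. Applying $\psi^{-1}$ coefficient-wise produces $\tilde q\in L(I)[Z]$ with $\tilde q\mid p$ and $\deg\tilde q\ge 2$; pick any root $z\in\tilde K$ of $\tilde q$, so $z\in P\setminus L(I)$. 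The $K$-isomorphism $L(I)[Z]/(\tilde q)\cong L'[Z]/(q)$ induced by $\psi$ then provides a $K$-embedding $L(I)(z)\to L'(z^*)\subseteq M$, contradicting the ``no'' returned by Lemma \ref{POLY} at termination.

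The main obstacle is precisely this last verification step, which rests on the Galois-theoretic extension argument together with Remark \ref{ROOT}; everything else is bookkeeping driven by Lemma \ref{POLY} and by factoring over successive finite separable extensions of $K$. Effectiveness (resp.\ primitive recursiveness) follows from that of the factoring procedure [FrJ08, Lemma 19.2.2] and of Lemma \ref{POLY}, since the loop body is executed at most $\deg p$ times.
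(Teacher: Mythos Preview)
Your proof is correct and follows essentially the same approach as the paper's: both repeatedly invoke Lemma~\ref{POLY} to enlarge the current field by a root in $P$ until the answer is ``no'', and then verify $\psi(I)=P\cap M$ by the same contradiction argument (extending $\psi$ along a hypothetical extra root $z^*\in P\cap M\setminus L'$). The only cosmetic difference is that the paper phrases this as an induction on $|P\setminus L|$ with two cases, whereas you unwind it as an explicit loop with an invariant.
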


\begin{proof}
The assumptions of our lemma coincide with the assumptions of
Lemma \ref{POLY}, so the conclusion of that
lemma holds in our situation.
We set $L'=\phi(L)$ and consider the two possible cases.
\begin{itemize}
\item[\bf Case A:]
{There is no $K$-embedding $L(z)\to M$
with $z\in P\setminus L$}
Then, we set $I=P\cap L$ and $\psi=\phi$.
Hence, $L(I)=L$, so $I=P\cap L(I)$.
By Remark \ref{ROOT}, $\psi(P\cap L)=P\cap L'$.

Note that $\psi(I)=\psi(P\cap L)\subset P\cap M$.
If there exists $z'\in P\cap M\setminus\psi(I)$,
then by the preceding paragraph, $z'\in M\setminus L'$.
Thus, there exists $z\in K_s$ and an extension of $\psi$ to
an isomorphism $\psi'\colon L(z)\to L'(z')$ such that
$\psi'(z)=z'$.
Hence, $z\in P\setminus L$, in contrast to our assumption.

It follows from this contradiction that $\psi(I)=P\cap M$.

\item[\bf Case B:]
{Case A does not occur}
Using Lemma \ref{POLY},
we find (resp.~effectively find) $z\in P\setminus L$ and construct
(effectively construct) a
$K$-embedding $\phi'$ of $L(z)$ into $M$.
Then, $|P\setminus L(z)|<|P\setminus L|$.
By induction, we may find (resp.~effectively find)
a subset $I$ of $P$ and a
$K$-embedding $\psi$ of $L(z,I)$ into $M$ such that $I=P\cap
L(z,I)$ and $\psi(I)=P\cap M$.
In particular, since $z\in P$, we have $z\in I$.
Thus, $L(I,z)=L(I)$ and $I=P\cap L(I)$.
\end{itemize}
\end{proof}

\begin{lemma}
 \label{RES}
Let $M$ be a subfield of $\tilde K$ that contains $K$,
let $p\in K[Z]$ be a monic separable polynomial,
and let $P$ be the set of all roots of $p$ in $K_s$.
Consider an extension $L$ of $K$ in $K_s$
and let $\phi$ and $\phi'$ be $K$-embeddings of $L$ into $M$.
Suppose that $\phi(P\cap L)=P\cap M$.
Then, $\phi'(P\cap L)=P\cap M$.
\end{lemma}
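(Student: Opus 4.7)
The plan is to reduce everything to Remark \ref{ROOT} plus a cardinality count. Note that $P$ is a finite set (the roots of the separable polynomial $p$), so the various intersections $P \cap L$, $P \cap \phi(L)$, $P \cap \phi'(L)$, and $P \cap M$ are all finite, and equality among them can be checked by combining an inclusion with a count.

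First, I would apply Remark \ref{ROOT} to the $K$-isomorphism $\phi \colon L \to \phi(L)$ (viewed as a map between subfields of $\tilde K$). This yields $\phi(P \cap L) = P \cap \phi(L)$. Combined with the hypothesis $\phi(P \cap L) = P \cap M$, this gives $P \cap \phi(L) = P \cap M$; in particular every root of $p$ lying in $M$ already lies in $\phi(L)$. A second application of Remark \ref{ROOT}, now to $\phi' \colon L \to \phi'(L)$, gives $\phi'(P \cap L) = P \cap \phi'(L)$, so the lemma will follow once I show $P \cap \phi'(L) = P \cap M$.

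For this, observe that since $\phi'$ is a $K$-isomorphism from $L$ onto $\phi'(L)$, we have $|P \cap \phi'(L)| = |P \cap L| = |P \cap \phi(L)| = |P \cap M|$, where the middle equality uses Remark \ref{ROOT} applied to $\phi$ again. On the other hand, because $\phi'(L) \subseteq M$, we trivially have $P \cap \phi'(L) \subseteq P \cap M$. An inclusion of finite sets of the same cardinality is an equality, so $P \cap \phi'(L) = P \cap M$. Thus $\phi'(P \cap L) = P \cap \phi'(L) = P \cap M$, as desired.

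There is no real obstacle here; the only subtlety is making sure I do not confuse $\phi(L)$ with $L$ itself when applying Remark \ref{ROOT}, since the remark is about $K$-isomorphisms between subfields of $\tilde K$ rather than about embeddings with a common source. Once that is handled, the finiteness of $P$ does all the work.
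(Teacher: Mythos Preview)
Your proof is correct and follows essentially the same approach as the paper: both arguments apply Remark~\ref{ROOT} to identify $\phi(P\cap L)$ and $\phi'(P\cap L)$ with $P\cap\phi(L)$ and $P\cap\phi'(L)$, respectively, and then use a cardinality count together with the inclusion $P\cap\phi'(L)\subset P\cap M$ to finish. The only cosmetic difference is that the paper introduces the composite $K$-isomorphism $\tau=\phi'\circ\phi^{-1}\colon\phi(L)\to\phi'(L)$ to mediate the cardinality comparison, whereas you compare both $|P\cap\phi(L)|$ and $|P\cap\phi'(L)|$ directly to $|P\cap L|$.
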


\begin{proof}
We set $M_0=\phi(L)$ and $M'_0=\phi'(L)$
and let $\tau\colon M_0\to M'_0$ be the $K$-isomorphism that
satisfies $\tau\circ\phi=\phi'$.
By Observation \ref{ROOT},
$\phi(P\cap L)=P\cap M_0$,
$\phi'(P\cap L)=P\cap M'_0$,
and $\tau(P\cap M_0)=P\cap M'_0$.
It follows from $\phi(P\cap L)=P\cap M$ that
$P\cap M=P\cap M_0$.
By the relation $\tau(P\cap M_0)=P\cap M'_0$ and the
injectivity of $\tau$, we have
$|P\cap M'_0|=|P\cap M_0|=|P\cap M|$.
Since $P\cap M'_0\subset P\cap M$,
we deduce that $P\cap M'_0=P\cap M$.
Finally,
$\phi'(P\cap L)
=\tau(\phi(P\cap L))
=\tau(P\cap M_0)
=P\cap M'_0
=P\cap M$,
as asserted.
\end{proof}

We denote the maximal purely inseparable extension of a field
$F$ by $F_{\mbox{ins}}$.

\begin{lemma}
\label{INS}
Let $F$ be a recursive (resp.~primitive recursive)
subfield of $K_s$ that contains $K$.
Suppose that we can decide (resp.~primitively
decide) for each monic separable polynomial $f\in
K[X]$ whether $f$ has a root in $F$.
Then, $F_{\mbox{ins}}$ is also a recursive (resp.~primitive
recursive) subfield of $\tilde K$ and
we can decide (resp.~primitively decide)
for each monic polynomial $f\in K[X]$ whether $f$ has a root in
$F_{\mbox{ins}}$.
\end{lemma}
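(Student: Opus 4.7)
The plan is to reduce everything to the structural fact $F_{\mbox{ins}}\cap K_s = F$ together with the standard separable-part decomposition of a polynomial. If $\charr(K)=0$, then $F_{\mbox{ins}}=F$ and both conclusions are immediate, so assume $p=\charr(K)>0$.

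The first ingredient is the observation that $F_{\mbox{ins}}\cap K_s = F$: any element of the intersection is both purely inseparable and separable over $F$, hence lies in $F$. The second ingredient is the familiar decomposition that assigns to each monic irreducible $f\in K[X]$ the unique pair $(r,g)$ with $g\in K[X]$ monic separable and $f(X)=g(X^{p^r})$; the factor $g$ is automatically irreducible in $K[X]$. This assignment is primitive recursive: iteratively test whether $f'=0$, and if so replace $f$ by the polynomial obtained by the reindexing $X^{pj}\mapsto Y^j$ (no extraction of $p$-th roots in $K$ is needed), incrementing $r$. The procedure terminates by degree.

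These ingredients combine into the key equivalence: for $a\in\tilde K$ with $\mbox{irr}(a,K)=g(X^{p^r})$ as above, one has $a\in F_{\mbox{ins}}$ if and only if $b:=a^{p^r}\in F$. Indeed the forward direction is trivial, while for the reverse, $b$ is separable over $K$ and $a$ is purely inseparable over $K(b)$, so $b\in F$ places $a$ in the purely inseparable closure of $F$, namely $F_{\mbox{ins}}$; equivalently, $b\in F_{\mbox{ins}}\cap K_s=F$.

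With this equivalence in hand, both conclusions are routine. To prove $F_{\mbox{ins}}$ is (primitive) recursive in $\tilde K$, given $a\in\tilde K$ I would effectively compute $\mbox{irr}(a,K)$, extract the pair $(r,g)$, form $b=a^{p^r}\in K_s$, and test $b\in F$ using the (primitive) recursiveness of $F$ in $K_s$. To decide whether a given monic $f\in K[X]$ has a root in $F_{\mbox{ins}}$, I would effectively factor $f$ into monic irreducibles $f_1,\ldots,f_n$, extract the separable parts $g_i\in K[X]$ with $f_i(X)=g_i(X^{p^{r_i}})$, and apply the hypothesis to each $g_i$. The equivalence, applied to each irreducible factor, confirms correctness: a root $\beta\in F$ of some $g_i$ has a $p^{r_i}$-th root $\alpha\in F_{\mbox{ins}}$ which is a root of $f_i$, and conversely every root of $f$ in $F_{\mbox{ins}}$ lies in some $f_i$ and its $p^{r_i}$-th power is a root of $g_i$ in $F$.

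The only step that could cause trouble is the primitive recursiveness of the extraction $f\mapsto(r,g)$, but as noted above this reduces to repeated testing of $f'=0$ and reindexing of coefficients, both of which are obviously primitive recursive within the elimination-theory framework of Section \ref{RECR}. Everything else is a direct composition of operations already known to be primitive recursive, so the whole procedure has the required complexity.
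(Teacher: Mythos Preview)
Your proof is correct and follows essentially the same route as the paper: reduce to $\charr(K)=p>0$, use the decomposition $f(X)=g(X^q)$ of an irreducible $f$ with $g$ separable and $q$ a $p$-power, and exploit $F_{\mbox{ins}}\cap K_s=F$ to translate both the membership test and the root-existence test into questions about $F$. The paper presents the two conclusions in the opposite order but the content is the same; your added remark that extracting $(r,g)$ is primitive recursive (via iterated testing of $f'=0$ and coefficient reindexing) is a small clarification the paper leaves implicit.
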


\begin{proof}
It suffices to consider the case where $p=\mbox{chr}(K)>0$.
By our assumptions on $K$,
we are able to effectively factor each monic $f\in K[X]$
into irreducible polynomials over $K$.
Hence, in order to decide whether $f$ has a root in
$F_{\mbox{ins}}$, we may assume that $f$ is irreducible.
In this case we write $f(X)=g(X^q)$,
where $g$ is an irreducible separable polynomial in $K[X]$
and $q$ is a power of $p$.
If $y$ is a root of $g$ in $F$ and $z^q=y$ with $z\in\tilde K$,
then $f(z)=0$ and $z\in F_{\mbox{ins}}$.
On the other hand, if $z\in F_{\mbox{ins}}$ and $f(z)=0$,
we get for $y=z^q$ that $g(y)=f(z)=0$ and
$y\in K_s\cap F_{\mbox{ins}}=F$.
By assumption, we may decide (resp.~effectively decide)
whether $g$ has a root in $F$.
Hence, we may decide (resp.~effectively decide)
whether $f$ has a root in $F_{\mbox{ins}}$.

Consider $x\in\tilde K$ and compute $f(X)=\mbox{irr}(x,K)$.
Then, we write $f(X)=g(X^q)$,
where $g\in K[X]$ is separable
and $q$ is a power of $p$.
Then, $x^q\in K_s$ and we can check
(resp.~effectively check) whether $x^q\in F$,
because $F$ is a recursive (resp.~primitive recursive)
subfield of $K_s$.
This is the case if and only if $x\in F_{\mbox{ins}}$.
\end{proof}

\begin{theorem}
\label{SHL}
Let $M$ be a perfect subfield of $\tilde K$ that
contains $K$.
Suppose that the existential theory of $M$ in
$\calL(\mbox{ring},K)$ is
decidable (resp.~primitively decidable).
Then, $\tilde K$ has a recursive
(resp.~primitive recursive) subfield $L$ that contains $K$
and is $K$-isomorphic to $M$.
\end{theorem}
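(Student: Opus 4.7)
The plan is to build $L$ in two stages: first construct a recursive (resp.\ primitive recursive) subfield $L_s$ of $K_s$ that is $K$-isomorphic to $M_s := M \cap K_s$ by iterating Lemma \ref{IND}, and then recover $L$ as the maximal purely inseparable extension $(L_s)_{\mbox{ins}}$. Because $M$ is perfect, every $x \in M$ has, for $p$ the characteristic exponent of $K$, some power $x^{p^k}$ lying in $M \cap K_s = M_s$; hence $M = (M_s)_{\mbox{ins}}$, so any $K$-isomorphism $L_s \isomto M_s$ extends uniquely to a $K$-isomorphism $L = (L_s)_{\mbox{ins}} \isomto M$. Lemma \ref{INS} will convert recursiveness of $L_s$ in $K_s$ (together with the ability to test whether an arbitrary monic separable $f \in K[Z]$ has a root in $L_s$) into recursiveness of $L$ in $\tilde K$.

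To build $L_s$, effectively enumerate the monic separable polynomials of $K[Z]$ as $p_1, p_2, \ldots$, with root sets $P_n \subset K_s$. Set $L_0 := K$ and $\phi_0 := \mbox{id}_K$. At stage $n \ge 1$, apply Lemma \ref{IND} to $L_{n-1}$, $\phi_{n-1}$, and $p_n$ to produce (resp.\ effectively produce) a subset $I_n \subseteq P_n$ and a $K$-embedding $\phi_n : L_n \to M$, with $L_n := L_{n-1}(I_n)$, such that $I_n = P_n \cap L_n$ and $\phi_n(I_n) = P_n \cap M$. Set $L_s := \bigcup_n L_n$. I claim that $L_s \cap P_n = I_n$ for every $n$. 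For $m \ge n$, the restriction $\phi_m|_{L_n}$ is another $K$-embedding $L_n \to M$, so Lemma \ref{RES} gives $\phi_m(P_n \cap L_n) = P_n \cap M$; by Remark \ref{ROOT} applied to $\phi_m : L_m \isomto \phi_m(L_m)$, this forces $\phi_m(P_n \cap L_m) = P_n \cap \phi_m(L_m) = P_n \cap M$, hence $|P_n \cap L_m| = |P_n \cap M| = |I_n|$. Combined with $I_n \subseteq P_n \cap L_m$, this yields $P_n \cap L_m = I_n$, and taking unions over $m$ proves the claim. Consequently, to decide whether $x \in K_s$ belongs to $L_s$, one computes $\mbox{irr}(x,K) = p_n$, runs stages $1, \ldots, n$, and tests whether $x \in I_n$; this is (primitively) recursive. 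The same procedure determines whether $p_n$ has a root in $L_s$ (iff $I_n \ne \emptyset$), so the hypothesis of Lemma \ref{INS} is met.

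It remains to produce a \emph{single} $K$-isomorphism $L_s \isomto M_s$, which is the main obstacle: by Remark \ref{NEXT}, the $\phi_n$ need not extend one another, so they do not directly glue into a map on the union. I circumvent this by a compactness argument. Let $\Phi_n$ be the set of $K$-embeddings $L_n \to M$; it is finite (of size at most $[L_n:K]$) and nonempty (it contains $\phi_n$), and restriction defines maps $\Phi_{n+1} \to \Phi_n$. A sequential inverse limit of nonempty finite sets is nonempty, so there is a coherent sequence $(\phi_n^\ast) \in \varprojlim \Phi_n$ that assembles into a $K$-embedding $\phi^\ast : L_s \to M$. For each $n$, Lemma \ref{RES} applied to $\phi_n$ and $\phi_n^\ast$ yields $\phi_n^\ast(I_n) = P_n \cap M$, so $\phi^\ast(L_s) \supseteq \bigcup_n (P_n \cap M) = M_s$; together with $\phi^\ast(L_s) \subseteq M \cap K_s = M_s$, this gives $\phi^\ast : L_s \isomto M_s$. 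Lemma \ref{INS} then produces $L = (L_s)_{\mbox{ins}}$ as a recursive (resp.\ primitive recursive) subfield of $\tilde K$, and $\phi^\ast$ extends uniquely to the desired $K$-isomorphism $L \isomto M$.
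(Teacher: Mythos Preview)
Your proof is correct and follows essentially the same strategy as the paper: iterate Lemma~\ref{IND} along an enumeration of monic separable polynomials to build a tower $(L_n)$, use an inverse-limit-of-finite-sets argument to splice the individual embeddings into a single $K$-isomorphism onto $M\cap K_s$, and then apply Lemma~\ref{INS} for the purely inseparable part. The only cosmetic differences are that the paper enumerates \emph{irreducible} monic separable polynomials (so the root sets $P_j$ are disjoint) and establishes recursiveness of $L_\infty$ \emph{after} constructing the global embedding $\phi$, whereas you establish $P_n\cap L_s=I_n$ directly via Lemma~\ref{RES} and a cardinality count before producing $\phi^\ast$; both orderings work.
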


\begin{proof}
We make a primitive recursive list, $p_1,p_2,p_3,\ldots$,
of all monic separable irreducible polynomials in $K[Z]$.
For each positive integer $j$ we compute the set $P_j$ of
all roots of $p_j$ in $K_s$.
Since $p_1,p_2,p_3,\ldots$ are distinct and irreducible
polynomials,
\begin{enumerate}
\item \label{cf1}
the sets $P_1,P_2,P_3,\ldots$ are disjoint.

Using Lemma \ref{IND}, we inductively construct a
recursive (resp.~primitive recursive)
ascending tower $L_0\subset L_1\subset L_2\subset\cdots$
of finite extensions of $K$ in $K_s$ with $L_0=K$.
Moreover, for each positive integer $j$ there is a $K$-embedding
$\psi_j\colon L_j\to M$ such that
\item \label{cf2}
$\psi_j(P_j\cap L_j)=P_j\cap M$.

We consider the subfield
$L_\infty=\union_{j=1}^\infty L_j$ of $K_s$ (hence, also of
$\tilde K$) and set $M_\infty=M\cap K_s$.

For each positive integer $j$ we let $E_j$ be the set of all
$K$-embeddings $\psi\colon L_j\to M$ such that
$\psi(P_j\cap L_j)=P_j\cap M$.
The set $E_j$ is non-empty (because $\psi_j\in E_j$) and
finite (because $[L_j:K]<\infty$).
Moreover, if $\phi\in E_{j+1}$, then by Lemma \ref{RES},
$\phi|_{L_j}\in E_j$.
Thus, $E_0,E_1,E_2,\ldots$ form an inverse system of
finite sets.
Hence, by [FrJ08, p.~3, Cor.~1.1.4],
there exists a $K$-embedding
$\phi\colon L_\infty\to M$
such that $\phi|_{L_j}\in E_j$, i.e.
\item \label{cf3}
$\phi(P_j\cap L_j)=P_j\cap M$

\noindent
for each positive integer $j$.
\end{enumerate}
 {\bf Claim A}: {$\phi$ maps $L_\infty$ isomorphically onto $M_\infty$}
Indeed, since $\phi(L_\infty)\subset M$ and $L_\infty\subset K_s$, we have
$\phi(L_\infty)\subset M_\infty$.
Conversely, let $y\in M_\infty$.
By definition, $K_s=\union_{j=1}^\infty P_j$.
Hence, there exists a positive integer $j$ such that
$y\in P_j$.
By \eqref{cf3} there exists $x\in P_j\cap L_j$ (hence, $x\in
L_\infty$) such that $\phi(x)=y$.
Thus, $\phi(L_\infty)=M_\infty$.
Since $\phi$ is injective, $\phi$ maps $L_\infty$
isomorphically onto $M_\infty$.\\
 {\bf Claim B}:
{$L_\infty=\union_{j=1}^\infty P_j\cap L_j$}
Indeed, by definition, the right hand side is contained in
the left hand side.
Conversely, let $x\in L_\infty$.
Then, there exists a positive integer $j$ with $x\in P_j$.
Thus, $\phi(x)\in P_j\cap M$.
By \eqref{cf3}, $x\in P_j\cap L_j$, as claimed.\\
 {\bf Claim C}: The field $L_\infty$ is recursive (resp.~primitive
recursive) in $K_s$.
Indeed, given $x\in K_s$ we can effectively find a positive integer $j$
with $p_j(x)=0$.
This means that $x\in P_j$.
Then, we check (resp.~effectively check) if $x\in L_j$.
If this is the case, then $x\in L_\infty$.
Otherwise, $x\notin L_\infty$.
Indeed, if $x\in L_\infty$, then $\phi(x)\in
M_\infty=\union_{j'=1}^\infty P_{j'}\cap M_\infty$.
Thus, there exists a positive integer $j'$ with
$\phi(x)\in P_{j'}$.
Since $\phi(P_j)=P_j$, it follows from \eqref{cf1} that $j'=j$,
Therefore, by \eqref{cf3}, $x\in L_j$,
in contrast to our assumption.\\
{\bf Conclusion of the proof}:  Since $M$ is perfect and $M_\infty=M\cap K_s$, the field $M$
is the maximal purely inseparable extension of $M_\infty$.
Let $L$ be the maximal purely inseparable extension of
$L_\infty$
and extend $\phi$, using Claim A, in the unique possible way to an
isomorphism $\phi\colon L\to M$.
By Claim C and Lemma \ref{INS}, $L$ is a recursive
(resp.~primitive recursive) subfield of $\tilde K$.
\end{proof}

\begin{remark}
\label{ISM}
Note that we do not claim nor we do not prove that the
$K$-isomorphism $L\to M$ mentioned in Theorem \ref{SHL}
is recursive.
Indeed, let $\calM$ be a $K$-conjugacy class of fields with
an existential decidable theory.
Only countably many of them are recursive subfields of $\tilde K$.
For each of them there are only countably many recursive
$K$-embeddings into $\tilde K$.
Thus, all but countably many fields in $\calM$ are not
images of those embeddings.
\end{remark}

\begin{example}
\label{RP}
Let $R$ be a real closure of $\Q$.
Then, $R$ is elementarily equivalent to the field $\R$ of
real numbers [Pre81, p.~51, Cor.~5.3].
By Tarski [Tar48, p.~42, Thm.~37],
$\mbox{Th}(\R)$ is primitively decidable, hence so are $\mbox{Th}(R)$
and $\mbox{Ex}(R)$.
Similarly, for each positive integer $p$, we choose a
Henselization $\Q_p$ of $\Q$ with respect to the $p$-adic
valuation of $\Q$.
Then, $\Q_p$ is elementary equivalent to the field
$\hat \Q_p$ of all $p$-adic numbers [PrR84, p.~86, Thm.~5.1].
We know that
$\mbox{Th}(\hat \Q_p)$ is decidable
[Mar02, p.~97, Cor.~3.3.16],
and even primitively decidable [Wei84, p.~84, Cor.~3.11(iii)].
Hence, so are $\mbox{Th}(\Q_p)$ and $\mbox{Ex}(\Q_p)$.

By Emil Artin, $\Aut(R)$ is trivial [Lan93, p.~455, Thm.~XI.2.9].
By F.~K.~Schmidt [Sch33], the same is true for $\Aut(\Q_p)$
[Jar91, Prop.~14.5].
Since $\Gal(\Q)$ is uncountable, the fields $R$ and
$\Q_p$ have uncountably many $\Q$-conjugates.
It follows from Theorem \ref{SHL}
that there exists a primitive recursive
subfield $L$ of $\tilde \Q$ which is isomorphic to $R$
(resp.~to $\Q_p$).
However, by Remark \ref{ISM}, there exists a conjugate
$R'$ of $R$ (resp.~$\Q'_p$ of $\Q_p$)
which is not the image of a recursive
subfield $L$ of $\tilde \Q$ by a recursive isomorphism.
\end{example}


\section{Decidable Large Fields}
\label{DEC}

We consider a presented field $K$ with elimination theory
as in Section \ref{RECR}.
In addition to the field operations discussed in that
section, we note that all of the standard operations on
Galois extensions of $K$ and of given finite extensions of
$K$ and with their Galois groups can be
carried out in a primitive recursive way
[FrJ08, pp.~411--412, Sec.~19.3].
In addition,
using [FrJ08, p.~413, Lemma 19.4.1], we effectively
construct a sequence $z_1,z_2,z_3,\ldots$
of elements that presents $K_s$ over $K$.
We can also effectively construct a sequence
$\tilde z_1,\tilde z_2,\tilde z_3,\ldots$ of elements that presents
$\tilde K$ over $K$.
Thus, $K_s=K(z_1,z_2,z_3,\ldots)$ and
$\tilde K=K(\tilde z_1,\tilde z_2,\tilde z_3,\ldots)$.
Again, we recall that by the above mentioned lemma,
both fields have the splitting algorithm.

\begin{notation}
\label{HIL}
We consider variables $T_1,\ldots,T_r,X$ over $K$ and
abbreviate $T_1,\ldots,T_r$ by $\mathbf T$.  Let $f_1,\ldots,f_m$ be polynomials in $K(\mathbf T)[X]$
which are irreducible and separable in the ring $K(\mathbf T)[X]$
and let $g$ be a non-zero
polynomial in $K[\mathbf T]$.
Following [FrJ08, Sec.~12.1], we write
$H_K(f_1,\ldots,f_m;g)$ for the set of all $\mathbf a\in K^r$
such that $f_1(\mathbf a,X),\ldots,f_m(\mathbf a,X)$ are defined,
irreducible, and separable in $K[X]$.
In addition, $g(\mathbf a)$ is defined and non-zero.
Then, we call $H_K(f_1,\ldots,f_r;g)$ a {\bf separable
Hilbert subset} of $K^r$.
A {\bf separable Hilbert set} of $K$ is a separable Hilbert
subset of $K^r$ for some positive integer $r$. We say that
$K$ is {\bf Hilbertian} if each separable Hilbert set of $K$
is non-empty.
\end{notation}

\begin{lemma}
 \label{DECa}
Suppose $K$ is Hilbertian.
Given a separable Hilbert subset $H$ of $K^r$, we can
recursively find
$(a_1,\ldots,a_r)\in H$.
\end{lemma}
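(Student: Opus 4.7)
The plan is to exhibit a brute-force search: enumerate $K^r$ and, for each candidate $\mathbf a$, effectively test whether $\mathbf a\in H$. Because $K$ is Hilbertian the search is guaranteed to terminate, but since we have no a priori bound on when, the procedure is recursive rather than primitive recursive -- consistent with the paper's own remark that Theorem B uses ``a recursive (but not primitive recursive) version of Hilbert's irreducibility theorem''.

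First I would invoke the elimination theory of $K$ (see Section \ref{RECR}) together with the effective presentations from [FrJ08, Sec.~19.3] to fix a primitive recursive enumeration $\mathbf a^{(1)},\mathbf a^{(2)},\mathbf a^{(3)},\ldots$ of $K^r$. Next I would describe the test ``$\mathbf a\in H_K(f_1,\ldots,f_m;g)$'' as an effective procedure. Write each $f_i\in K(\mathbf T)[X]$ as $f_i=h_i^{-1}\tilde f_i$ with $\tilde f_i\in K[\mathbf T,X]$ and $h_i\in K[\mathbf T]\setminus\{0\}$ a common denominator of its coefficients. For a given $\mathbf a$, effectively evaluate $g(\mathbf a)$, $h_i(\mathbf a)$, and the coefficients of $\tilde f_i(\mathbf a,X)$ in $K$. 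If $g(\mathbf a)=0$ or some $h_i(\mathbf a)=0$ or the $X$-degree of $\tilde f_i(\mathbf a,X)$ drops below $\deg_X f_i$, reject $\mathbf a$.

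Otherwise $f_i(\mathbf a,X)\in K[X]$ is well defined with the correct degree, and I would use the splitting algorithm built into the elimination theory of $K$ ([FrJ08, p.~407, Lemma 19.2.2]) to factor each $f_i(\mathbf a,X)$ into monic irreducibles in $K[X]$; this primitively recursively decides irreducibility. Separability is checked effectively, for example by computing $\gcd(f_i(\mathbf a,X),\partial f_i(\mathbf a,X)/\partial X)$ in $K[X]$ and testing whether it is a nonzero constant (in characteristic $p$, it suffices because $f_i(\mathbf a,X)$ is already known to be irreducible at this point). Accept $\mathbf a$ iff every $f_i(\mathbf a,X)$ is irreducible and separable in $K[X]$ and $g(\mathbf a)\neq0$; by definition this is exactly membership in $H$.

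Now perform this test on $\mathbf a^{(1)},\mathbf a^{(2)},\ldots$ in turn and output the first $\mathbf a^{(n)}$ that is accepted. Each individual test terminates, and since $K$ is Hilbertian the set $H$ is non-empty, so some $\mathbf a^{(n)}$ is accepted and the procedure halts. The main conceptual obstacle -- and the reason the result is only recursive -- is precisely this unbounded waiting time: Hilbertianness of $K$ guarantees that a valid point exists but gives no computable upper bound on the index $n$ at which one appears in our enumeration, so we cannot in general convert the above unbounded search into a primitive recursive procedure.
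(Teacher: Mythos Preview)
Your proposal is correct and follows essentially the same approach as the paper's proof: enumerate $K^r$, test each candidate for membership in $H$ using the splitting algorithm of $K$, and output the first success, with Hilbertianity guaranteeing termination. You supply a bit more detail than the paper (clearing denominators, checking for degree drop, the explicit gcd test for separability), but the argument is the same brute-force search, and your closing remark about the unbounded wait precisely matches the paper's observation that the procedure is recursive but not primitive recursive.
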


\begin{proof}
Using Cantor's first diagonal method, we can effectively
write down a list $(\mathbf a_1,\mathbf a_2,\mathbf a_3,\ldots)$,
with $\mathbf a_i=(a_{i1},\ldots,a_{ir})$,
of all elements of $K^r$.
Let $H=H_K(f_1,\ldots,f_m;g)$ as in Notation \ref{HIL}.

Since $K$ has the splitting algorithm, we may effectively check the
irreducibility of the polynomials
$f_1(\mathbf a_i,X),\ldots,f_m(\mathbf a_i,X)$ over $K$ and their
separability for $i=1,2,3,\ldots$, and also the condition
$g(\mathbf a_i)\ne0$.
Since $K$ is Hilbertian, we will certainly hit an $i$  such that
$f_1(\mathbf a_i,X),\ldots,\mathbf f_m(\mathbf a_i,X)$ are defined,
irreducible, and separable over $K$, and $g(\mathbf a_i)\ne0$,
as needed.

This gives us a recursive procedure
(but not a primitive recursive procedure) to find $\mathbf a$ in $H$.
\end{proof}

\begin{lemma}
\label{DECb}
Suppose that $K$ is Hilbertian, $L$ is a given finite
separable extension of $K$, and $H$ is a given separable Hilbert
subset of $L^r$.
Then, we can effectively find
a separable Hilbert subset $H_K$ of $K^r$ which is contained in $H$.
\end{lemma}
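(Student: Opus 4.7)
The plan is to package the whole Hilbert condition defining $H = H_L(f_1,\ldots,f_m;g)$ into the irreducibility of a single polynomial over $K(\mathbf T)$ and then invoke the Hilbertianity of $K$ for that polynomial. All field-theoretic primitives invoked below---primitive elements, minimal polynomials, discriminants, norms, clearing of denominators---are primitive recursive over $K$, over $L$, and over the function fields $K(\mathbf T)$ and $L(\mathbf T)$, by the elimination theory recalled in Section \ref{RECR} and developed in [FrJ08, Sec.~19.1--19.4].

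First I would effectively compute a primitive element $\alpha$ for $L/K$ with minimal polynomial $p=\mbox{irr}(\alpha,K)\in K[Y]$. Fixing one root $\beta_i$ of $f_i$ in a separable algebraic closure of $L(\mathbf T)$ for each $i$, the effective primitive element theorem yields an element $\gamma$ generating $L(\mathbf T,\beta_1,\ldots,\beta_m)$ over $K(\mathbf T)$ together with explicit polynomial expressions for each of $\alpha,\beta_1,\ldots,\beta_m$ as a polynomial in $\gamma$ over $K(\mathbf T)$. Let $F(\mathbf T,Z)\in K[\mathbf T,Z]$ be a cleared-denominator form of $\mbox{irr}(\gamma,K(\mathbf T))$; it is irreducible and separable in $K(\mathbf T)[Z]$. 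Next, pick a nonzero $h\in L[\mathbf T]$ whose non-vanishing at $\mathbf a$ simultaneously encodes $g(\mathbf a)\ne 0$ together with the requirement that each $f_i(\mathbf a,X)$ be defined, of full degree, and separable in $L[X]$ (for instance, the product of $g$ with the $\mathbf T$-denominators, leading $X$-coefficients, and $X$-discriminants of the $f_i$). Define $G\in K[\mathbf T]\setminus\{0\}$ to be a common multiple of the $\mathbf T$-denominators and leading $Z$-coefficient of $F$, the $Z$-discriminant of $F$, and the norm $N_{L/K}(h)\in K[\mathbf T]$; the last factor is nonzero because $L/K$ is separable and $h\ne 0$. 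Set $H_K:=H_K(F;G)\subseteq K^r$.

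I claim $H_K\subseteq H$. Let $\mathbf a\in H_K$. The identity $N_{L/K}(h)(\mathbf a)=N_{L/K}(h(\mathbf a))$, valid since $\mathbf a\in K^r$, together with injectivity of $N_{L/K}$ on $L^\times$, gives $h(\mathbf a)\ne 0$; this secures $g(\mathbf a)\ne 0$ as well as the definedness, full degree, and separability of every $f_i(\mathbf a,X)$ in $L[X]$. The remaining factors in $G$ make $F(\mathbf a,Z)$ defined and separable in $K[Z]$, and $\mathbf a\in H_K$ makes it irreducible there; hence $[K(\gamma_\mathbf a):K]=\deg_Z F=[L(\mathbf T,\beta_1,\ldots,\beta_m):K(\mathbf T)]$. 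Specializing the polynomial expressions for $\alpha$ and the $\beta_i$ at $\mathbf T=\mathbf a$ shows that $K(\gamma_\mathbf a)$ contains $\alpha_\mathbf a$ (a root of $p$, so $K(\alpha_\mathbf a)\cong L$ of degree $[L:K]$) and each $\beta_{i,\mathbf a}$ (a root of $f_i(\mathbf a,X)$). In the tower $K\subseteq K(\alpha_\mathbf a)\subseteq K(\alpha_\mathbf a,\beta_{1,\mathbf a},\ldots,\beta_{m,\mathbf a})\subseteq K(\gamma_\mathbf a)$, each intermediate degree is bounded above by its counterpart in the generic tower over $K(\mathbf T)$, while the total degree already equals the generic one; multiplicativity of degrees forces equality at every stage. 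In particular $[L(\beta_{i,\mathbf a}):L]=[L(\mathbf T,\beta_i):L(\mathbf T)]=\deg_X f_i$, so $f_i(\mathbf a,X)$ is irreducible over $L$ and $\mathbf a\in H$.

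The main obstacle is establishing the full \emph{effectivity} of the construction: extending the effective primitive element theorem to the compositum $L(\mathbf T,\beta_1,\ldots,\beta_m)$ over $K(\mathbf T)$ with explicit expressions and controlled denominators, and computing $N_{L/K}(h)\in K[\mathbf T]$ primitive recursively from $h\in L[\mathbf T]$. Each ingredient is standard within the framework of [FrJ08, Sec.~19], but they must be invoked in sequence with careful bookkeeping. Once this is in hand, the containment $H_K\subseteq H$ reduces to the clean degree count above and needs no further ingenuity.
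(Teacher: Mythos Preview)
Your approach---use a primitive element and a degree count to reduce irreducibility over $L$ to irreducibility over $K$---is precisely the engine behind [FrJ08, Lemma~12.2.1], which the paper invokes. The paper, however, applies it to each $f_i$ \emph{separately}, producing irreducible separable polynomials $p_1,\ldots,p_m\in K(\mathbf T)[X]$ (one per $f_i$) and setting $H_K=H_K(p_1,\ldots,p_m;h)$. You instead take a single primitive element $\gamma$ for the whole compositum $L(\mathbf T,\beta_1,\ldots,\beta_m)/K(\mathbf T)$ and a single $F$.

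That compression creates a gap at the sentence ``In particular $[L(\beta_{i,\mathbf a}):L]=\deg_X f_i$.'' Your three-stage tower correctly yields
\[
[K(\alpha_{\mathbf a},\beta_{1,\mathbf a},\ldots,\beta_{m,\mathbf a}):K(\alpha_{\mathbf a})]
=[L(\mathbf T,\beta_1,\ldots,\beta_m):L(\mathbf T)],
\]
but the degree of the \emph{compositum} does not determine the degree of the individual subfield $K(\alpha_{\mathbf a},\beta_{i,\mathbf a})$ over $K(\alpha_{\mathbf a})$ once $m\ge 2$ and the generic extensions $L(\mathbf T,\beta_i)$ fail to be linearly disjoint over $L(\mathbf T)$ (so the compositum degree is strictly less than $\prod_i\deg f_i$). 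To rescue the single-$F$ route you would need, for each $i$, to specialize $\mbox{irr}\bigl(\gamma,L(\mathbf T,\beta_i)\bigr)$ in order to bound $[K(\gamma_{\mathbf a}):K(\alpha_{\mathbf a},\beta_{i,\mathbf a})]$ by its generic value $\deg F/\bigl([L:K]\deg f_i\bigr)$; then a two-term degree count does force $[K(\alpha_{\mathbf a},\beta_{i,\mathbf a}):K(\alpha_{\mathbf a})]=\deg f_i$. But that step requires its own denominator and leading-coefficient bookkeeping in $G$, none of which you have recorded. The clean fix---and exactly what the paper does via [FrJ08, Lemmas~12.2.1--12.2.2]---is to run your argument with $m=1$ once for each $i$: take $\gamma_i$ primitive for $L(\mathbf T,\beta_i)/K(\mathbf T)$, set $p_i=\mbox{irr}(\gamma_i,K(\mathbf T))$, and let $H_K=H_K(p_1,\ldots,p_m;G)$. (A minor notational point: your ``$L(\beta_{i,\mathbf a})$'' should be ``$K(\alpha_{\mathbf a})(\beta_{i,\mathbf a})$'', with the conclusion transported to $L$ via the $K$-isomorphism $L\to K(\alpha_{\mathbf a})$ sending $\alpha\mapsto\alpha_{\mathbf a}$; this is harmless once stated.)
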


\begin{proof}
Let $H=H_L(f_1,\ldots,f_m;g)$, where $f_1,\ldots,f_m$ are
irreducible separable polynomials in $L(\mathbf T)[X]$ and $g\in
L[\mathbf T]$ non-zero.
Without loss we may assume that the coefficients of
$f_1,\ldots,f_m$ are in $L[\mathbf T]$.
The proof of [FrJ08, p.~224, Lemma 12.2.2] uses the proof of
[FrJ08, p.~223, Lemma 12.2.1]
with $L(T_1,\ldots,T_r)$ replacing $L$ in the
latter lemma to effectively produce
a non-zero polynomial $h\in L[T_1,\ldots,T_r]$
and irreducible separable polynomials
$p_1,\ldots,p_m$ in $K(T_1,\ldots,T_r)[X]$
with the following property:
If $\mathbf a\in K^r$, $h(\mathbf a)\ne0$,
and the $p_i(\mathbf a,X)$'s are defined, irreducible, and separable in
$K[X]$,
then $f_1(\mathbf a,X),\ldots,f_m(\mathbf a,X)$ are defined,
irreducible, and separable in $L[X]$, and $g(\mathbf a)\ne0$.
Replacing $h$ by the product of all its $K$-conjugates
(an effective operation),
$H_K(p_1,\ldots,p_m;h)$ is a separable Hilbert subset of $K^r$
which is contained in $H$.
\end{proof}

Recall that a field $M$ is {\bf PAC} if every 
absolutely integral variety over $M$ has an $M$-rational
point.
We denote the free profinite group on $e$ generators by
$\\hat F_e$ [FrJ08, p.~349, first paragraph].
We also denote the absolute Galois group of a field $F$ by
$\Gal(F)$.

\begin{lemma}
\label{JAX}
Let $M$ be an extension of $K$ in $\tilde K$.
Suppose $M$ is perfect and PAC, $\Gal(M)\isom \hat F_e$, and one may check
whether a given monic polynomial in $K[X]$ has a root in
$M$.
Then, $\mbox{Th}(M)$ is decidable.
\end{lemma}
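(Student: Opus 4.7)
The plan is to reduce to the decidability theorem for $e$-free PAC fields, the relative version of which is developed in [FrJ08, Chap.~20] (with origins in [JaK75] and [FHJ84]). That theorem says: for a presented field $K$ with elimination theory, any perfect PAC extension $M$ of $K$ with $\Gal(M)\cong \hat F_e$ has decidable elementary theory in $\mathcal{L}(\mbox{ring},K)$, provided that one has an oracle deciding whether a given monic polynomial in $K[X]$ has a root in $M$. This matches our hypotheses verbatim, so the conclusion follows by a direct appeal to that theorem.

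To unpack the black box, the proof proceeds by a Galois-stratification reduction. An arbitrary sentence $\theta$ in $\mathcal{L}(\mbox{ring},K)$ is rewritten, primitive-recursively in $\theta$, as a Boolean combination of ``test-sentences'' of the form ``there exists $y\in M$ with $f(y)=0$ and with the induced Frobenius class in $\Gal(F/K)$ lying in a prescribed conjugacy-invariant set'', for specified monic separable $f\in K[X]$ and finite Galois extensions $F/K$. Two features drive the reduction: the PAC property turns the existence of $M$-points on varieties into an absolute-irreducibility check for components defined over appropriate finite extensions of $K$, and the assumption $\Gal(M)\cong\hat F_e$ completely pins down the Galois-theoretic side-conditions in terms of the finite $e$-generated quotients of $\hat F_e$, which are enumerable. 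Each resulting test-sentence is then equivalent to a root-existence statement for a polynomial in $K[X]$, which we may decide by hypothesis.

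The concrete steps are therefore (i) verify the hypotheses of the decidability theorem in [FrJ08, Chap.~20] --- namely $K$ presented with elimination theory, $M$ perfect PAC over $K$, $\Gal(M)\cong \hat F_e$, and the root-testing oracle available --- and (ii) invoke the theorem. The main obstacle, were one to avoid the citation, would be to carry out the primitive-recursive Galois-stratification algorithm and the associated analysis of how finite quotients of $\hat F_e$ control Galois actions on finite extensions of $K$ inside $\tilde K$. Since that technology has already been set down in [FrJ08], the present lemma reduces to a bookkeeping check of the hypotheses.
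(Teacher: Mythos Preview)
Your proposal is correct in outcome but takes a different route from the paper, and conflates two distinct mechanisms from [FrJ08].

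The paper's proof does \emph{not} use Galois stratification. Instead it argues as follows: let $\mbox{Ax}(K,M)$ be the union of the axioms $\mbox{Ax}(K,e)$ of [FrJ08, p.~437, Prop.~20.4.4] (asserting that a field is perfect, PAC, and has absolute Galois group $\hat F_e$) together with, for every monic $f\in K[X]$, the sentence ``$f$ has a root'' or ``$f$ has no root'' according to the oracle. This is a recursive axiom set satisfied by $M$. Using [FrJ08, p.~441, Lemma 20.6.3(b)] and the elementary-equivalence criterion [FrJ08, p.~436, Cor.~20.4.2], one checks that any model of $\mbox{Ax}(K,M)$ is elementarily equivalent to $M$; hence $\mbox{Ax}(K,M)$ is complete. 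By G\"odel completeness, a recursive complete theory is decidable: enumerate formal proofs from $\mbox{Ax}(K,M)$ until one hits a proof of $\theta$ or of $\neg\theta$.

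Your Galois-stratification sketch is the content of [FrJ08, Chap.~30], not Chap.~20, and it is the heavier machine: it yields an explicit quantifier-elimination procedure, hence primitive-recursive decidability (cf.\ Remark~\ref{DECf} in the paper). The paper's argument is more elementary---it needs only the model-theoretic completeness of the axiom system, not the effective elimination---at the cost of giving only recursiveness, not primitive recursiveness. Also note that your first paragraph slightly overclaims: Chap.~20 of [FrJ08] does not contain a single theorem one can cite verbatim for this lemma; the paper's proof is precisely the assembly of the cited pieces (Prop.~20.4.4, Cor.~20.4.2, Lemma~20.6.3(b)) into such a statement.
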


\begin{proof}
Let $\mbox{Root}(M)$ be the recursive set of all monic polynomials
in $K[X]$ that have a root in $M$.
Let $\mbox{Ax}(K,e)$ be the set of axioms in the language
$\calL(\mbox{ring},K)$ given in [FrJ08, p.~437, Prop.~20.4.4].
Thus, a field extension $F$ of $K$
satisfies $\mbox{Ax}(K,e)$ if and only if $F$ is perfect, PAC, and
$\Gal(F)\isom \hat F_e$.
Let $\mbox{Ax}(K,M)$ be the union of $\mbox{Ax}(K,e)$ and the axioms
that say that each $f\in\mbox{Root}(M)$ has a root in $M$ and each
monic $f\in K[X]\setminus\mbox{Root}(M)$ does not have a root in
$M$.
In particular, $M\models\mbox{Ax}(K,M)$.
Thus, if a field extension $F$ of $K$ is equivalent to $M$
as a structure of $\calL(\mbox{ring},K)$, then $F\models\mbox{Ax}(K,M)$.

Conversely, if $F\models\mbox{Ax}(K,M)$, then a monic polynomial
$f\in K[X]$ has a root in $F$ if and only if $f$ has a root
in $M$.
By [FrJ08, p.~441, Lemma 20.6.3(b)], $F\cap\tilde K\isom_K M$.
Hence, by [FrJ08, p.~436, Cor.~20.4.2], $F$ is elementarily
equivalent to $M$ as a structure of $\calL(\mbox{ring},K)$.

It follows from G\"odel completeness theorem
[FrJ08, p.~154, Cor.~8.2.6] that a
sentence $\theta$ of $\calL(\mbox{ring},K)$ is true in $M$ if and
only if $\theta$ has a formal proof in $\calL(\mbox{ring},K)$ from the
axioms $\mbox{Ax}(K,M)$ [FrJ08, Sec.~8.1, p.~150].
Thus, to check whether $\theta$ is true in $M$, one makes a
list of all formal proofs in $\calL(\mbox{ring},K)$ from the
axioms $\mbox{Ax}(K,M)$ and check them one by one.
After finitely many steps, one finds a proof of $\theta$ or of
$\neg\theta$.
In the first case $\theta$ is true in $M$ in the latter case
$\theta$ is false in $M$.
Consequently, $\mbox{Th}(M)$ is decidable.
\end{proof}

\begin{proposition}
 \label{DECc}
Let $K$ be a presented field with elimination theory.
Suppose that $K$ is Hilbertian.
Then, we can
for every positive integer $e$
recursively construct a decidable perfect PAC algebraic
extension $M$ of $K$
with $\Gal(M)\isom\hat F_e$ which is recursive in $\tilde K$.
\end{proposition}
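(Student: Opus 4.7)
The plan is to construct $M$ as the maximal purely inseparable extension of the fixed field of $e$ recursively-built automorphisms $\sigma_1,\ldots,\sigma_e\in\Gal(K)$, mirroring the classical Jarden-style construction of a PAC extension with free Galois group $\hat F_e$. In the non-effective proof, Haar measure on $\Gal(K)^e$ shows a random $e$-tuple works almost surely; here we must explicitly produce one, using the effective Hilbert irreducibility of Lemmas \ref{DECa} and \ref{DECb} as a substitute.

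First I would fix a primitive-recursive ascending tower $K=F_0\subset F_1\subset F_2\subset\cdots$ of finite Galois extensions of $K$ in $K_s$ whose union is $K_s$, together with the transition maps $\Gal(F_{n+1}/K)\twoheadrightarrow\Gal(F_n/K)$. Simultaneously I would enumerate two lists of challenges: (i) all absolutely irreducible polynomials $f(\mathbf T,X)\in F_n[\mathbf T,X]$ with nonzero $g\in F_n[\mathbf T]$ as $n$ varies, for the PAC condition; and (ii) all finite $e$-generator groups $G$, to force the closed subgroup $\langle\sigma_1,\ldots,\sigma_e\rangle\subseteq\Gal(K)$ to surject onto each such $G$ and hence be isomorphic to $\hat F_e$. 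Along the tower I would recursively build compatible tuples $\boldsymbol\sigma^{(n)}=(\sigma_1^{(n)},\ldots,\sigma_e^{(n)})\in\Gal(F_n/K)^e$ whose inverse limit is the desired $\boldsymbol\sigma$. At each stage, for the next PAC challenge $f\in F_\ell[\mathbf T,X]$, I would use Lemma \ref{DECb} to produce a separable Hilbert subset $H_K\subseteq K^r$ inside the Hilbert subset of $F_\ell^r$ on which $f$ specializes irreducibly, then apply Lemma \ref{DECa} to find $\mathbf a\in H_K$ so that $f(\mathbf a,X)$ retains its generic Galois structure; I would then advance the tower past the splitting field of $f(\mathbf a,X)$ and extend $\boldsymbol\sigma^{(n)}$ so that some $\sigma_i^{(n+1)}$ fixes a root of $f(\mathbf a,X)$, thereby placing that root in the eventual fixed field. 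For the next freeness challenge $G$, I would similarly advance the tower to contain a Galois extension realizing $G$ (which exists because $K$ is Hilbertian, and can be located effectively via Lemma \ref{DECa}) and lift $\boldsymbol\sigma^{(n)}$ so that its image in $G$ equals a prescribed generating $e$-tuple.

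Finally, let $M'$ be the fixed field of $\boldsymbol\sigma$ in $K_s$ and set $M=(M')_{\mathrm{ins}}$. An argument parallel to Claim~C in the proof of Theorem \ref{SHL} shows $M'$ is recursive in $K_s$, and Lemma \ref{INS} lifts this to recursiveness of $M$ in $\tilde K$ together with the decidability of ``$f$ has a root in $M$'' for each monic $f\in K[X]$. PAC-ness holds because every absolutely irreducible $f(\mathbf T,X)$ over $M$ has all coefficients in some $F_n$ and is therefore handled at a finite stage, acquiring an $M$-rational point; and $\Gal(M)\simeq\hat F_e$ because $\sigma_1,\ldots,\sigma_e$ give at most $e$ topological generators of $\Gal(\tilde K/M)$, while the freeness-challenge processing realizes every finite $e$-generator group as a quotient. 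Decidability of $\mbox{Th}(M)$ then follows from Lemma \ref{JAX}. The main obstacle will be the compatibility of the two kinds of steps: each PAC step constrains the lift by forcing some $\sigma_i^{(n+1)}$ to fix a particular root, shrinking the set of admissible continuations available for later freeness steps. Scheduling these interleaved extensions so that the admissible set of lifts at every stage remains nonempty --- and effectively testable --- is exactly where Hilbertianity of $K$, combined with Lemma \ref{DECa}, supplies the needed slack.
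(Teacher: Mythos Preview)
Your overall architecture matches the paper's: interleave ``PAC challenges'' (absolutely irreducible polynomials) with ``freeness challenges'' ($e$-generated finite groups), building an inverse system whose limit is the desired $\boldsymbol\sigma$. But two points need correction.

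First, the sentence ``extend $\boldsymbol\sigma^{(n)}$ so that some $\sigma_i^{(n+1)}$ fixes a root of $f(\mathbf a,X)$, thereby placing that root in the eventual fixed field'' is wrong as written: the fixed field of $\langle\sigma_1,\ldots,\sigma_e\rangle$ consists of elements fixed by \emph{all} $\sigma_i$, not by one of them. You must force every $\sigma_i^{(n+1)}$ to fix the chosen root $b$. Once you do this, the constraint on later freeness steps is much more severe than your closing paragraph suggests.

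Second, you correctly flag the compatibility of PAC and freeness steps as ``the main obstacle,'' but you do not say how it is overcome, and ``Hilbertianity supplies the needed slack'' is not an argument. The paper's resolution is a specific linear-disjointness device. For the PAC step one uses Lemma~\ref{DECb} and Lemma~\ref{DECa} to choose $a\in K$ so that $f_{n+1}(a,X)$ is irreducible not merely over $K$ but over the current ambient Galois extension $N_n$; then $K(b)$ is linearly disjoint from $N_n$ over $K$, so adjoining $b$ to $M_n$ leaves $\Gal(N_n/M_n)$ untouched and the generators lift without constraint. For the freeness step one realizes $S_r\supseteq G_{n+1}$ over $K$ via Hilbert irreducibility, and by producing several linearly disjoint such realizations one finds an $L_i$ with $L_i\cap N'_n=K$; the resulting direct-product decomposition $\Gal(N_{n+1}/M_n(b))\cong\Gal(N'_n/M_n(b))\times\Gal(L_i/K)$ lets you lift the old generators while simultaneously hitting a prescribed $e$-tuple generating $G_{n+1}$. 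This linear disjointness also yields $N_n\cap M_\infty=M_n$, which is exactly what makes the recursiveness check in your ``Claim~C'' analogue go through; without it, membership of $z\in K_s$ in $M_\infty$ cannot be decided at a finite stage.

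A smaller point: your PAC enumeration runs over polynomials with coefficients in $F_n$, but the field $M$ you are building need not contain $F_n$, so most of those challenges are irrelevant and you cannot in advance tell which. The paper sidesteps this by enumerating only absolutely irreducible $f\in K[T,X]$ and invoking [FrJ08, Thm.~11.2.3], which says that having $M$-points on all such curves already forces $M$ to be PAC.
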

\begin{proof}
We construct a primitive recursive list
$(G_1,G_2,G_3,\ldots)$
of all finite non-trivial groups that are generated by $e$
elements.
As mentioned in the first paragraph of this section,
$\tilde K$ has the splitting algorithm.
Hence, by [FrJ08, p.~405, Lemma 19.1.3(c)] applied to
$\tilde K$ rather than to $K$, we may find out whether a given
polynomial $f\in K[T,X]$ is irreducible over
$\tilde K$.
We use this test to
build a recursive list
$(f_1,f_2,f_3,\ldots)$ of all absolutely irreducible
polynomials in $K[T,X]$ that are monic and separable in $X$.
The rest of the proof breaks up into several parts.\\
{\bf Part A}:
{The induction plan}
By induction we effectively construct an ascending
sequence $(N_0,N_1,N_2,\ldots)$ of
presented finite Galois extensions of $K$ in $K_s$
and for each $n$ a presented
subfield $M_n$ of $N_n$ that contains $K$,
such that the following conditions hold for each $n\in\N$:
\be
\item[(1a)] \label{cf1a}
$z_n\in N_n$ (where, as in the beginning of this section,
$z_1,z_2,z_3,\ldots$ present $K_s$ over $K$).

\item[(1b)] \label{cf1b}
There exist $a\in K$ and $b\in M_n$ such that $f_n(a,b)=0$.

\item[(1c)] \label{cf1c}
The group $\Gal(N_n/M_n)$ is generated by $e$ elements,
it has $G_n$ as a quotient, and
$N_{n-1}\cap M_n=M_{n-1}$.
\ee
 {\bf Part B}:
{The field $N'_n$}
We start the induction by setting $M_0=N_0=K$.
Next we consider $n\ge1$ and assume that $N_0,\ldots,N_n$
and $M_0,\ldots,M_n$
have already been effectively constructed such that \eqref{cf1} holds
with $n$ replaced by $m$ for
$m=0,\ldots,n$.

Since $f_{n+1}$ is absolutely irreducible,
$f_{n+1}$ is irreducible over $N_n$.
Hence, we can
use Lemma \ref{DECb} to construct
a separable Hilbert subset $H$ of $K$ such that $f_{n+1}(a,X)$ is
irreducible over $N_n$ for each $a\in H$.
Then, we use Lemma \ref{DECa}
to effectively choose $a\in K$ such that $a\in H$.
In the next step we choose $b\in K_s$ with $f_{n+1}(a,b)=0$.
Hence, $N_n$ and $K(b)$ are linearly disjoint over $K$,
so $N_n\cap M_n(b)=M_n$.
Therefore, $\mbox{res}\colon\Gal(N_n(b)/M_n(b))\to\Gal(N_n/M_n)$
is an epimorphism.
We use [FrJ08, p.~412, Lemma 19.3.2] to effectively construct the
Galois closure $N'_n$ of $N_n(b,z_{n+1})/K$.
\[
\xymatrix{
& N'_n \ar@{-}[d]
\cr
N_n \ar@{-}[r] \ar@{-}[d]
& N_n(b) \ar@{-}[d]
\cr
M_n \ar@{-}[r] \ar@{-}[d]
& M_n(b) \ar@{-}[d]
\cr
K \ar@{-}[r]
& K(b)
}
\]
{\bf Part C}:
{Construction of $N_{n+1}$}.
We compute the order $r$ of $G_{n+1}$ and
embed $G_{n+1}$ into the symmetric group $S_r$.
For every field $F$, the Galois group of the general
polynomial $X^r+T_1X^{r-1}+\cdots+T_r$ over
$F(T_1,\ldots,T_r)$ is the symmetric group $S_r$
[Lan93, p.~272, Example VI.2.2].
The proof of
[FrJ08, p.~231, Lemma 13.3.1] gives
a separable Hilbert subset $H$
of $F^r$ such that 
$\Gal(X^r+a_1X^{r-1}+\cdots+a_r,F)\isom S_r$
for each $\mathbf a\in F^r$.

Next we compute the number $s$ of subfields
of $N'_n$ that properly contain $K$,
and use the preceding paragraph
and Lemmas \ref{DECa} and \ref{DECb}
to construct
$s+1$ linearly disjoint Galois extensions
$L_1,\ldots,L_{s+1}$ of $K$ with Galois group $S_r$.
The intersection of at least one of these fields with
$N'_n$ is $K$.
Computing the intersections
$N'_n\cap L_1,\ldots,N'_n\cap L_{s+1}$, we find an $i$
between $1$ and $s+1$ such that $N'_n\cap L_i=K$.
In other words, $N'_n$ and $L_i$ are linearly disjoint over $K$.
Hence, $N'_n$ and $M_n(b)L_i$ are linearly disjoint over $M_n(b)$.
We set $N_{n+1}=N'_nL_i$.\\
{\bf Part D}:
{Construction of $M_{n+1}$}. \setcounter{equation}{1}
By the preceding paragraph,
\begin{equation}
\label{cf2}
\Gal(N_{n+1}/M_n(b))
\isom\Gal(N'_n/M_n(b))\times\Gal(M_n(b)L_i/M_n(b)).
\end{equation}
In addition, $M_n(b)$ is linearly disjoint from $L_i$ over
$K$.
We effectively find $\tau_1,\ldots,\tau_e$ in
$\Gal(L_i/K)$ that
generate a subgroup which is isomorphic to $G_{n+1}$
[FrJ08, p.~412, Lemma 19.3.2]
and set $K_i$ to be the fixed field of
$\tau_1,\ldots,\tau_e$ in $L_i$.
Then, $\Gal(M_n(b)L_i/M_n(b)K_i)\isom\Gal(L_i/K_i)\isom
G_{n+1}$.
$$
\xymatrix{
& N'_n \ar@{-}[rr] \ar@{-}[d]
&& N_{n+1} \ar@{-}[dd]
\\
N_n \ar@{-}[r] \ar@{-}[d]
& N_n(b) \ar@{-}[d]
\\
M_n \ar@{-}[r] \ar@{-}[dd]
& M_n(b) \ar@{-}[r] \ar@{-}[d]
& M_n(b)K_i \ar@{-}[r]^{G_{n+1}} \ar@{-}[d]
& M_n(b)L_i \ar@{-}[d]
\\
& K(b) \ar@{-}[r] \ar@{-}[dl]
& K_i(b) \ar@{-}[r]^{G_{n+1}} \ar@{-}[d]
& L_i(b) \ar@{-}[d]
\\
K \ar@{-}[rr]
&& K_i \ar@{-}[r]^{G_{n+1}}
& L_i
}
$$

By (1c), we find
$\sigma_{n,1},\ldots,\sigma_{n,e}$ of
$\Gal(N'_n/M_n(b))$ whose restriction to $N_n(b)$
generate $\Gal(N_n(b)/M_n(b))$, so their restrictions to $N_n$
generate $\Gal(N_n/M_n)$.
By \eqref{cf2}, we can effectively find
$\sigma_{n+1,1},\ldots,\sigma_{n+1,e}$
in $\Gal(N_{n+1}/M_n(b))$
whose restrictions to $N'_n$ are
$\sigma_{n,1},\ldots,\sigma_{n,e}$, respectively,
and whose restrictions to $L_i$ are $\tau_1,\ldots,\tau_e$,
respectively.
Thus, by (1c), both $G_n$ and $G_{n+1}$ are quotients of the subgroup
$H=\langle\sigma_{n+1,1},\ldots,\sigma_{n+1,e}\rangle$
of $\Gal(N_{n+1}/M_n(b))$.
Then, the restriction of $H$ to $L_i$ is $G_{n+1}$ and the
restriction of $H$ to $N_n$ is $\Gal(N_n/M_n)$.
Let $M_{n+1}$ be the fixed field of $H$ in $N_{n+1}$.
Then, $G_{n+1}$ is a quotient of $\Gal(N_{n+1}/M_{n+1})$,
and $N_n\cap M_{n+1}=M_n$.
This concludes the $(n+1)$th step of the induction.

We put all of the fields
mentioned above appears in the following
diagram of fields:
$$
\xymatrix{
&& N'_n \ar@{-}[r] \ar@{-}[dl]
& N_{n+1} \ar@{-}[dl]^H \ar@{-}[dd] \ar@{-}[dll]
\\
N_n \ar@{-}[r] \ar@{-}[d]
& N_n(b) \ar@{-}[d]
& M_{n+1} \ar@{-}[d] \ar@{-}[dl]
\\
M_n \ar@{-}[r] \ar@{-}[d]
&
M_n(b) \ar@{-}[r] \ar@{-}[d]
& M_n(b)K_i \ar@{-}[r]^{G_{n+1}} \ar@{-}[d]
& M_n(b)L_i \ar@{-}[d]
\\
K \ar@{=}[r]
& K \ar@{-}[r]
& K_i \ar@{-}[r]^{G_{n+1}}
& L_i
}
$$
{\bf Part E}:
{The field $M_\infty$}
By the defining property of $z_1,z_2,z_3,\ldots$
and by (1a),
$\union_{n=1}^\infty N_n=K_s$.
By Part A, $M_\infty=\union_{n=1}^\infty M_n$ is a presented
recursive subfield of $K_s$.
Moreover, for $n'>n$, (1c) and induction on $n'-n$
imply that $M_{n'}\cap N_n=M_n$.,
Hence, $M_\infty\cap N_n=M_n$ for each positive integer $n$.
Also,
$\Gal(M_\infty)$ is the inverse limit of the groups
$\Gal(N_n/M_n)$.
Since each of these groups is generated by $e$ elements, so is
$\Gal(M_\infty)$ (as a profinite group).
In addition, since $G_n$ is a quotient of $\Gal(N_n/M_n)$,
each finite
group which is generated by $e$ elements is a quotient of
$\Gal(M_\infty)$.
Hence, $\Gal(M_\infty)\isom\hat F_e$ [FrJ08, p.~360, Lemma 17.7.1].
Finally, by (1b), each absolutely irreducible polynomial
in two variables with coefficients in $K$ has a zero in
$M_\infty$.
Therefore, by [FrJ08, p.~195, Thm.~11.2.3], $M_\infty$ is PAC.\\
{\bf Part F}:{The field $M_\infty$ is recursive in $K_s$}
Next we show how to decide whether a given monic separable
polynomial $f\in K[X]$ has a root in $M_\infty$.
Since $K$ has the splitting algorithm, we may assume that $f$
is irreducible.
Moreover, since $K_s$ has the splitting algorithm, we may
find a root $z$ of $f$ in $K_s$ and identify $z$ as $z_n$ for
some positive integer $n$.
By \eqref{cf1a}, $z\in N_n$ and so $f$ totally splits in $N_n$.
We check whether $G_n$ fixes any of the roots of $f$
(by [FrJ08, p.~412, Lemma 19.3.2]).
This will be the case if and only if $f$ has a root in
$M_n$.
Since, by Part E, $N_n\cap M_\infty=M_n$, this will be the case if and
only if $f$ has a root in $M_\infty$.

In the situation of the preceding paragraph, we may check
whether $G_n$ fixes $z$, hence whether $z\in M_\infty$.
This proves that $M_\infty$ is recursive in
$K_s$.\\
{\bf Part G}: {\it Conclusion of the proof}.
Finally, let $M$ be the maximal purely inseparable extension
of $M_\infty$ in $\tilde K$.
Then, $M$ is recursive in $\tilde K$ (Lemma \ref{INS}),
$M$ is PAC [FrJ08, p.~196, Cor.~11.2.5], and
$\Gal(M)\isom\Gal(M_\infty)\isom\\hat F_e$.
It follows from Lemma \ref{JAX} that $M$ is decidable.
\end{proof}

We are now in a position to prove a stronger version of
Theorem B of the introduction.

\begin{theorem}
\label{DECd}
Let $K$ be a presented field with elimination theory.
Suppose that $K$ is Hilbertian.
Then, we can
for every positive integer $e$
construct an infinite sequence of decidable PAC
perfect fields which are recursive in $\tilde K$, each with absolute Galois
group isomorphic to $\\hat F_e$.
\end{theorem}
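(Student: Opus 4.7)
Proposition \ref{DECc} already produces, for each positive integer $e$, a single decidable PAC perfect field $M$ with $\Gal(M)\isom\hat F_e$ that is recursive in $\tilde K$.  The plan is to iterate this construction with systematically enlarged base fields so as to produce a sequence of pairwise distinct such fields.

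Inductively construct $M^{(1)},M^{(2)},\ldots$ as follows.  Set $M^{(1)}$ to be the output of Proposition \ref{DECc} applied to $K$.  For each $k\ge 2$, assume $M^{(1)},\ldots,M^{(k-1)}$ have already been built, each a recursive subfield of $\tilde K$ of the required type.  Scan the primitive recursive enumeration $z_1,z_2,z_3,\ldots$ of $K_s$ coming from Section \ref{DEC} until the first index $n$ is found with $z_n\notin M^{(j)}$ for every $j<k$; this membership test is effective because each $M^{(j)}$ is recursive in $\tilde K$.  Put $\alpha_k=z_n$ and $L_k=K(\alpha_k)$.  Since $L_k$ is a finite separable extension of $K$, it is again a presented field with elimination theory and is still Hilbertian, so the hypotheses of Proposition \ref{DECc} hold with $L_k$ in place of $K$.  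Applying the proposition to $L_k$ produces a decidable PAC perfect field $M^{(k)}\supset L_k$ with $\Gal(M^{(k)})\isom\hat F_e$, recursive in $\widetilde{L_k}=\tilde K$.  By construction $\alpha_k\in M^{(k)}\setminus\bigcup_{j<k}M^{(j)}$, so $M^{(k)}$ differs from each predecessor, and the induction continues indefinitely.

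For the argument to work two points must be justified.  First, the search for $\alpha_k$ must always terminate, i.e.\ $K_s\neq\bigcup_{j<k}M^{(j)}$; this follows from the classical fact that an infinite field is not the union of finitely many proper subfields, together with the observation that each $M^{(j)}\cap K_s$ is a \emph{proper} subfield of $K_s$ because $\Gal(M^{(j)})\isom\hat F_e$ is non-trivial.  Second, Proposition \ref{DECc} must be applicable over $L_k$; this reduces to standard inheritance results: every finite separable extension of a presented field with elimination theory is again such a field, and every finite separable extension of a Hilbertian field is again Hilbertian (see [FrJ08, Chap.~13 and Sec.~19.2]).

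The main obstacle is largely bookkeeping: one must verify that each $M^{(k)}$ produced over the auxiliary base field $L_k$ is genuinely recursive in the \emph{original} presentation of $\tilde K$, not merely in the presentation induced from $L_k$.  This comes down to the effective intertranslatability between presentations of $\tilde K$ obtained from $K$ and from finite separable extensions of $K$, which is implicit in the effective primitive element theorem recalled in Section \ref{RECR} and hence poses no substantive difficulty.
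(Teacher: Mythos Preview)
Your proof is correct and follows essentially the same route as the paper: find an algebraic element outside the previously constructed fields, adjoin it to $K$, and apply Proposition \ref{DECc} over the resulting finite extension. The only cosmetic differences are that the paper searches for the new element in all of $\tilde K$ and invokes the linear-algebra fact (over the infinite field $K$) that a vector space is not a finite union of proper subspaces [Hup90], whereas you search inside $K_s$ via its explicit enumeration and appeal to the equivalent fact that a field is not a finite union of proper subfields; your added remark about compatibility of the two presentations of $\tilde K$ is a point the paper leaves implicit.
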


\begin{proof}
Let $n$ be a non-negative number and assume that we have
already constructed $n$ distinct decidable PAC perfect fields
$M^{(1)},\ldots,M^{(n)}$ with absolute Galois groups
isomorphic to $\\hat F_e$ and which are recursive in $\tilde K$.
In particular, $M^{(1)},\ldots,M^{(n)}$ are proper
$K$-vector-subspaces of $\tilde K$.
Hence, $\bigcup_{i=1}^nM^{(i)}$ is a proper subset of $\tilde K$
[Hup90, p.~11, A 1.1.c].
Since each $M^{(i)}$ is a recursive subset of $\tilde K$, we
may find $z\in\tilde K\setminus\bigcup_{i=1}^nM^{(i)}$.

Since $K$ has elimination theory, so has $K(z)$ [FrJ08,
p.~410, Def.~19.2.8].
By [FrJ08, p.~224, Cor.~12.2.3],
$K(z)$ is Hilbertian.
Hence, by Proposition \ref{DECc}, $K(z)$ has an
extension $M^{(n+1)}$ which is perfect, PAC, decidable,
$\Gal(M^{(n+1)})\isom\hat F_e$, and is a recursive subfield
of $\tilde K$.
Since $z\in M^{(n+1)}$, we have $M^{(n+1)}\ne M^{(i)}$ for
$i=1,\ldots,n$.
This concludes the induction.
\end{proof}

\begin{remark} \label{DECe}
If $N$ is a Galois extension of $K$ and $\mbox{Ex}(N)$ is
decidable (resp.~primitively decidable), then $N$ is also a
recursive (resp.~primitive recursive) subfield of $K_s$,
hence also of $\tilde K$.
Indeed, if $z\in K_s$, we construct $\mbox{irr}(z,K)$
and check whether $\mbox{irr}(z,K)$ has a root in $N$.
This is the case if and only if all of the roots of
$\mbox{irr}(z,K)$ belong to $N$,
hence if and only if $z\in N$.

For example, the field $\Q_{\rm tr}$ of all totally real
algebraic numbers is a Galois extension of $\Q$ and
$\mbox{Th}(\Q_{\mbox{tr}})$ is primitively decidable.
[FHV94, p.~90, Thm.~10.1].
If $S$ is a finite number of prime numbers,
then the field $\Q_{\mbox{totS}}$ of all totally $S$-adic numbers is
decidable [Ers96].
By the preceding paragraph, $\Q_{\mbox{tr}}$ is primitive
recursive in $\tilde \Q$ and $\Q_{\mbox{totS}}$ is  
recursive in $\tilde \Q$.
\end{remark}

\begin{remark}
\label{DECf}
{Primitive recursive procedures}
All of the procedures we use to construct the field $M$ in
Proposition \ref{DECc} are primitive recursive except
the procedure we use in Lemma \ref{DECa}.
Thus, if we can effectively find a point in every separable Hilbert
subset of $K$ (in which case we say that is {\bf $K$ 
effectively Hilbertian}), then we can effectively construct $M$
in Proposition \ref{DECc} and then also effectively construct the
infinite sequence of fields $M^{(1)},M^{(2)},M^{(3)},\ldots$
that appears in Theorem \ref{DECd}.
In addition, we can prove that the theories of those fields
are primitive recursive.

We can prove the above mentioned effectiveness results at
least in the case where $K$ is an infinite finitely
generated field (over its prime field).
Unfortunately, these improvements would need a lot of space
and so would go beyond the scope of this work.
So, we restrict ourselves to some hints for the
proofs.

In order to prove that a presented finitely generated
infinite field $K$ is effectively Hilbertian, it suffices to
prove it only in the cases where $K$ is either $K_0(t)$, where $K_0$ is
an infinite finitely generated field and $t$ is
indeterminate, or $K=\Q$, or $K=\F_p(t)$. 

In the first case, we notice that the proof of [FrJ08,
p.~236, Prop.~13.2.1] reduces the effective Hilbertianity to
the primitive recursiveness of $\mbox{Th}(\tilde K_0)$, which is
proved in [FrJ08, p.~170, Prop.~9.4.3].

The case where $K=\Q$ or $K=\F_p(t)$ involves effective
operations in $\Z$ like factoring positive integers into
products of prime numbers and effective Chebotarev density
theorem.
All of this appear in the proof of Theorem 13.3.5 of
[FrJ08] and the lemmata that proceed it in Chapter 13 of
[FrJ08].

Finally, we have to improve Lemma \ref{JAX} and prove
that, under the conditions of that lemma, $\mbox{Th}(M)$ is
primitive recursive.
This depends on ``elimination of quantifiers in the language
of Galois stratification'' given by [FrJ08, p.~721,
Prop.~30.5.3].
\end{remark}

\section{Appendix}
\label{APP}

We prove in this appendix a statement made in the
introduction.

For each field $K$, every positive integer, and every
${\boldsymbol \sigma}\in\Gal(K)^e$ we denote the maximal purely
inseparable extension of $K_s({\boldsymbol \sigma})$ by $\tilde K({\boldsymbol \sigma})$.

\begin{proposition}
\label{APPa}
Let $K$ be a Hilbertian field and let $e$ be a positive integer.
Then, there are uncountably many elementary equivalence
classes (in the language $\calL(\mbox{ring},K)$) of fields of the
form $\tilde K({\boldsymbol \sigma})$ with ${\boldsymbol \sigma}\in\Gal(K)^e$.

Moreover, the Haar measure of the set of pairs
$({\boldsymbol \sigma},{\boldsymbol \sigma}')\in\Gal(K)^{2e}$ such that
$K_s({\boldsymbol \sigma})$ and $K_s({\boldsymbol \sigma}')$ are not equivalent as
structures of the language $\calL(\mbox{ring},K)$ is $1$.
\end{proposition}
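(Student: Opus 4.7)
I will attach to each $\boldsymbol \sigma\in\Gal(K)^e$ a binary signature $\chi(\boldsymbol \sigma)\in\{0,1\}^{\N}$ that is determined by the elementary $\calL(\mbox{ring},K)$-theory of $\tilde K(\boldsymbol \sigma)$, and then use Haar-theoretic independence to see that $\chi$ separates uncountably many classes and that almost every pair in $\Gal(K)^{2e}$ is separated by some coordinate of $\chi$.

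\textbf{Step 1 (test extensions).} Using Hilbertianity, I would produce a sequence $L_1,L_2,L_3,\ldots$ of finite Galois extensions of $K$ inside $K_s$ which are pairwise linearly disjoint over $K$ and satisfy $[L_n:K]=d_n\ge 2$. Such a sequence exists by a standard argument: when $\mbox{char}(K)\ne 2$, Hilbertianity forces $K^\times/(K^\times)^2$ to be infinite, yielding infinitely many linearly disjoint quadratic extensions; in characteristic $2$ one argues analogously via Artin--Schreier extensions. Let $f_n\in K[X]$ be the minimal polynomial of a primitive element of $L_n/K$; each $f_n$ is separable.

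\textbf{Step 2 (the signature is an elementary invariant).} Define
$$
\chi_n(\boldsymbol \sigma)=
\begin{cases} 1, & L_n\subseteq K_s(\boldsymbol \sigma),\\ 0, & \mbox{otherwise,}\end{cases}
\qquad
\chi(\boldsymbol \sigma)=(\chi_n(\boldsymbol \sigma))_{n\ge 1}.
$$
Because $L_n/K$ is Galois, $L_n\subseteq K_s(\boldsymbol \sigma)$ is equivalent to the existential sentence $\theta_n\colon (\exists X)\,f_n(X)=0$ holding in $K_s(\boldsymbol \sigma)$; since $f_n$ is separable, this last condition is equivalent to $\theta_n$ holding in $\tilde K(\boldsymbol \sigma)$. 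Thus $\chi_n(\boldsymbol \sigma)$, and so the whole signature $\chi(\boldsymbol \sigma)$, depends only on the elementary $\calL(\mbox{ring},K)$-theory of $K_s(\boldsymbol \sigma)$ (equivalently, of $\tilde K(\boldsymbol \sigma)$).

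\textbf{Step 3 (Haar independence and uncountability of classes).} The condition $L_n\subseteq K_s(\boldsymbol \sigma)$ amounts to each $\sigma_i$ restricting trivially to $L_n$. Linear disjointness of the $L_n$ makes the restriction
$$
\Gal(K)^e \longrightarrow \prod_{n\ge 1}\Gal(L_n/K)^e
$$
a continuous surjection pushing Haar measure forward to the product of uniform measures. Consequently the $\chi_n$ are independent Bernoulli variables on $\Gal(K)^e$ with
$$
p_n:=\mbox{Prob}(\chi_n=1)=d_n^{-e},\qquad 0<p_n\le 2^{-e}<1.
$$
The pushforward of Haar measure under $\chi$ is thus a non-atomic product Bernoulli measure on $\{0,1\}^{\N}$, so the image of $\chi$ is uncountable. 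Combined with Step 2, this yields uncountably many elementary equivalence classes of fields $\tilde K(\boldsymbol \sigma)$.

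\textbf{Step 4 (the measure statement).} For $n\ge 1$ put
$$
A_n=\{(\boldsymbol \sigma,\boldsymbol \sigma')\in\Gal(K)^{2e}:\chi_n(\boldsymbol \sigma)=\chi_n(\boldsymbol \sigma')\}.
$$
A direct computation gives $\mbox{meas}(A_n)=p_n^2+(1-p_n)^2=1-2p_n(1-p_n)\le 1-2\cdot 2^{-e}(1-2^{-e})=:\alpha<1$. The same linear-disjointness argument as in Step 3, now applied to $\Gal(K)^{2e}\to\prod_n\Gal(L_n/K)^{2e}$, makes the events $A_n$ independent, so
$$
\mbox{meas}\Bigl(\bigcap_{n\ge 1}A_n\Bigr)=\prod_{n\ge 1}\bigl(1-2p_n(1-p_n)\bigr)\le\prod_{n\ge 1}\alpha=0.
$$
For any $(\boldsymbol \sigma,\boldsymbol \sigma')\notin\bigcap_n A_n$, some $\chi_n$ differs, so by Step 2 the fields $K_s(\boldsymbol \sigma)$ and $K_s(\boldsymbol \sigma')$ are inequivalent as structures of $\calL(\mbox{ring},K)$. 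This gives the asserted Haar measure $1$.

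\textbf{Main obstacle.} The one genuinely delicate point is Step 1, producing a characteristic-uniform sequence of linearly disjoint nontrivial Galois extensions of $K$; the remainder is a standard orchestration of the elementary-invariance of $\chi$ with the independence supplied by Haar measure on $\Gal(K)^e$.
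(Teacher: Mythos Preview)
Your proof is correct and rests on the same basic ingredient as the paper's---a linearly disjoint sequence of quadratic separable extensions $L_n/K$ supplied by Hilbertianity---but the two arguments diverge from there. The paper works through an external model-theoretic lemma [FrJ08, Lemma 20.6.3(b)] to the effect that elementarily equivalent algebraic extensions of $K$ are $K$-conjugate; it then transfers everything to the abelian group $\Gal(L/K)$ (with $L=\prod_n L_n$): for the first statement it shows uncountably many fixed fields $L({\boldsymbol\sigma})$ by noting that every closed $e$-generated subgroup of $\Gal(L/K)$ is finite, hence of measure $0$; for the second it observes that the diagonal of $\Gal(L/K)^e$ in $\Gal(L/K)^{2e}$ has measure $0$. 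Your route is more direct and self-contained: by reading ``$L_n\subseteq K_s({\boldsymbol\sigma})$'' as the truth of the existential $\calL(\mbox{ring},K)$-sentence $(\exists X)\,f_n(X)=0$, you obtain the elementary invariant $\chi$ without invoking any outside lemma, and both conclusions then drop out of standard Bernoulli independence on $\Gal(K)^e$ and $\Gal(K)^{2e}$. The paper's argument yields, as a byproduct, the slightly stronger intermediate fact that uncountably many $\tilde K({\boldsymbol\sigma})$ are pairwise non-$K$-conjugate; your argument, on the other hand, explicitly exhibits the distinguishing sentences and avoids any appeal to the elementary-equivalence $\Rightarrow$ $K$-isomorphism principle. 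One small remark on wording: in Step~2 the phrase ``equivalently, of $\tilde K({\boldsymbol\sigma})$'' is a little loose, since $K_s({\boldsymbol\sigma})$ and $\tilde K({\boldsymbol\sigma})$ need not have the same full theory when $K$ is imperfect; what you actually use (and what is true) is that $\theta_n$ holds in one iff it holds in the other, because $f_n$ is separable.
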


\begin{proof}
Using the assumption that $K$ is Hilbertian,
we construct, by induction, a linearly disjoint sequence
$K_1,K_2,K_3,\ldots$ of quadratic separable extensions of
$K$ [FrJ08, p.~297, Cor.~16.2.7(b)] (if $\mbox{chr}(K)=2$,
one has to use [FrJ08, p.~296, Example 16.2.5(c) and p.~297,
Lemma 16.2.6]).
Let $L$ be the compositum of all these extensions.
Then, $\Gal(L/K)$ is an infinite profinite group of exponent
$2$.
In particular, the closed subgroup generated by every finite
subset is finite, hence has Haar measure $0$
in $\Gal(L/K)$.

We denote the normalized Haar measure of $\Gal(L/K)$ by $\mu$.
For each\break
 ${\boldsymbol \sigma}=(\sigma_1,\ldots,\sigma_e)\in\Gal(K)^e$
we denote the fixed
field of $\sigma_1,\ldots,\sigma_e$ in $L$ by $L({\boldsymbol \sigma})$ and
let $\langle{\boldsymbol \sigma}\rangle=\Gal(L/L({\boldsymbol \sigma}))$ be the closed
subgroup of $\Gal(L/K)$ generated by $\sigma_1,\ldots,\sigma_e$.

If there are only countably many fields
$L({\boldsymbol \sigma}^{(1)}),L({\boldsymbol \sigma}^{(2)}),L({\boldsymbol \sigma}^{(3)}),\ldots$
with ${\boldsymbol \sigma}^{(i)}\in\Gal(L/K)^e$,
then
$\Gal(L/K)=\bigcup_{i=1}^\infty\langle{\boldsymbol \sigma}^{(i)}\rangle$,
so
$
1=\mu(\Gal(L/K))
\le\sum_{i=1}^\infty\mu(\langle{\boldsymbol \sigma}^{(i)}\rangle)
=0,
$
which is a contradiction.
Hence, there is an uncountable subset $S$ of $\Gal(L/K)^e$
such that $L({\boldsymbol \sigma})\ne L({\boldsymbol \sigma}')$ for every distinct
elements ${\boldsymbol \sigma}$ and ${\boldsymbol \sigma}'$ of $S$.

We extend each ${\boldsymbol \sigma}\in S$ to an element $\tilde{\boldsymbol \sigma}$ of
$\Gal(K)^e$.
If ${\boldsymbol \sigma}'\in S$ and ${\boldsymbol \sigma}'\ne{\boldsymbol \sigma}$, then
$K_s(\tilde{\boldsymbol \sigma})$ is not $K$-conjugate to
$K_s(\widetilde{{\boldsymbol \sigma}'})$,
otherwise $L({\boldsymbol \sigma})$ and $L({\boldsymbol \sigma}')$ are $K$-conjugate,
hence equal, because $\Gal(L/K)$ is abelian.
It follows from [FrJ08, p.~441, Lemma 20.6.3(b)] that
$\tilde K(\tilde{\boldsymbol \sigma})$ and $\tilde K(\widetilde{{\boldsymbol \sigma}'})$ are not
elementarily equivalent as structures of $\calL(\mbox{ring},K)$.
This proves the first statement of the proposition.

The proof of the second statement of the proposition is
based on the observation that the diagonal $D$ of
$\Gal(L/K)^e$ has Haar measure $0$ in $\Gal(L/K)^{2e}$.
This is so, because $\Gal(L/K)^e$ is an infinite profinite
group and for every finite group $G$, the proportion of the
diagonal $\{(g,g)\in G^2| g\in G\}$ in $G^2$ is $1\over|G|$.
It follows from [Hal68, p.~279, Thm.~C] that the set
$\tilde D'=\{({\boldsymbol \sigma},{\boldsymbol \sigma}')\in\Gal(K)^{2e}| {\boldsymbol \sigma}|_L\ne{\boldsymbol \sigma}'|_L\}$
has Haar measure $1$.
By the preceding paragraph, $K_s({\boldsymbol \sigma})$ is not
elementarily equivalent to $K_s({\boldsymbol \sigma}')$ for all
$({\boldsymbol \sigma},{\boldsymbol \sigma}')\in\tilde D$.
\end{proof}

\end{document}